\newtheorem{TEO}{Theorem}[section]
\newtheorem{PROP}[TEO]{Proposition}
\newtheorem{LEM}[TEO]{Lemma}
\newtheorem{DEF}[TEO]{Definition}
\newtheorem{REM}[TEO]{Remark}
\newcommand\Oh{{\mathcal O}}
\newcommand\sB{{\mathcal B}}
\newcommand\sF{{\mathcal F}}
\newcommand\sG{{\mathcal G}}
\newcommand\sI{{\mathcal I}}
\newcommand\sL{{\mathcal L}}
\newcommand\sM{{\mathcal M}}
\newcommand\F{{\mathbb F}}
\newcommand\N{{\mathbb N}}
\newcommand\G{{\mathbb G}}
\newcommand\dual{\mathrel{\raise3pt\hbox{$\underline{\mathrm{\thinspace d
\thinspace}}$}}}
\newcommand\iso{\cong}
\newcommand\into{\hookrightarrow}
\newcommand\C{\mathbb C}
\newcommand\proj{\mathbb P}
\newcommand\Aut{\operatorname{Aut}}
\newcommand\im{\operatorname{Im}}
\newcommand\Cliff{\operatorname{Cliff}}
\newcommand\gon{\operatorname{gon}}
\newcommand\Hilb{\operatorname{Hilb}}
\newcommand\Pic{\operatorname{Pic}}
\newcommand\Sing{\operatorname{Sing}}
\newenvironment{proof}[1][]{\noindent\textbf{Proof#1}.  }{{\hfill $\blacksquare$}}
\begin{document}

\title{ Scrolls containing binary  curves}
\author{Marco Franciosi}
\date{}

\maketitle

\begin{abstract}
 We study families  of scrolls containing  a given rational curve and
families of rational curves contained in a fixed scroll
via  a  stratification  in terms of the degree of the induced map  
onto $\proj^1$ and we prove that  there is no rational normal scroll  of minimal degree and of dimension $\leq  \frac n2$ containing   a \textit{general} binary curve in $\proj^n$.

\hfill\break	
{\bf keyword:} algebraic curve,  rational normal scroll, binary curve

\hfill\break  {\bf Mathematics Subject Classification (2010)} 14H10, 14H51, 13D02, 14N25
\end{abstract}


\section{Introduction}

A classical result in birational geometry is 
 that a nondegenerate irreducible variety $X$ in $\proj^n$ has degree at least $\operatorname{codim}(X)+1$
and moreover if equality holds then either $X$ is a rational normal scroll, or it is a cone over the Veronese surface $\proj^2 \into \proj^5$, or it is a quadric hypersurface (see \cite[Theorem 1]{EH87}) .

The aim of this paper is the  study of  families  of scrolls containing  a fixed scheme (in particular a given rational curve) and
families of rational curves contained in a given  scroll  by considering the incidence correspondence of such families. 

We point out that by a rational normal scroll we mean not only the variety $\F(a_1, \ldots, a_d)$, but also its birational image in $\proj^n$, which may be singular 
 and that 
 by a family   of rational normal scrolls of dimension $d$ in $\proj^n$ we mean a subscheme of $\Hilb^{p}(\proj^n)$,  where 
 $p=p(h)$ is   the Hilbert polynomial of any irreducible normal scroll of dimension $d$ and degree $n-d+1$ in
$\proj^n$. 
 
 Bogomolov and Kulikov in \cite{BK} considered  the surface case,  by studying 
 the subschemes of $\Hilb(\proj^n)$  whose points correspond to the surface scrolls, resp.   to 
 cones over rational  normal  curves spanning $\proj^n$.
 
Here we firstly give an overview of families of rational normal scrolls satisfying some given conditions (see Thm. \ref{dimensionvarie}
which is a generalisation to every dimension of  \cite[Proposition 2]{BK}) 
and secondly    we study in detail the case in which it is given a rational normal curve $\Gamma$ passing through a set $S$ 
of $n+2$ points  in linear general position in $\proj^n$. To be more precise
fixing $k\in \N$
we consider  the family $\sF(S,k)_{\underline{a}}$  of scrolls of dimension $d= \frac{n}{2}$  containing  
 $\Gamma$   such that the induced map $\Gamma \to \proj^1$ has degree $k$ (see Prop.  \ref{Fk}). 
We study the main properties of such family  and we  show that for  two distinct integers $h,k$  
such that $k+h > d $ then
$\dim (\sF(S,k)_{\underline{a}} \cap \sF(S,h)_{\underline{a}}) \leq 2n-3$   (see Prop.  \ref{2n-3}).

 On the other side,  
one of the characterisations of scrolls is that  
they  contain many rational curves. 
 Here we study the family of rational normal curves passing through   $n+2$  fixed  points
and contained in a fixed scroll scroll $\F$
of dimension $d$.  In Prop. \ref{number_rnc}
we show that  such family   has dimension at most $\# (S \cap \text{Sing} (\F))$, whereas 
 in 
Prop. \ref{unicity}  we show that if  such family  contains more than one single curve then it has strictly  positive dimension.


As an application of the above results,
 in section 5  we study   families of scrolls containing binary curves. 
A binary curve is a
 stable curve $C$  consisting of two
 irreducible  curves,  both isomorphic to $\proj^1$,  intersecting transversally in $ n+2 $
distinct points  (hence of arithmetic genus $p_a(C)=n+1$).  In the non-hyperelliptic case $|K_C|$
embeds  $C$ as the union of two rational normal curves  of degree $n$ in $\proj^n$ passing through
$n+2$ points in linear general position, so that we can identify $C$ with its image in $\proj^n$. We want to point out firstly, as shown  by Colombo and Frediani in  \cite{CoFre}, 
the usefulness of binary curves in studying moduli problem of curves of genus $p_a(C)$ and secondly 
  that  
the existence of a scroll containing the curve $C$  forces the non-vanishing of higher order 
 syzygies  of the  ideal $I_C$ (see e.g. \cite{aprodu-nagel}).  
 
 Theorem \ref{no_small_scroll}  shows that given  a general  binary curve
 $C= \Gamma_1 \cup \Gamma_2$  in $\proj^n$,  
then there is no rational normal scroll  of minimal degree and of dimension $\leq  \frac n2$ containing $C$.

\subsection*{Acknowledgements} 
The author is grateful for support by the PRIN project  2015EYPTSB$\_$010  ``Geometry of Algebraic Varieties'' of italian MIUR. 

The author would like to thank Elisa Tenni for deep  and stimulating discussions on these arguments.

\section{Notation and conventions }
We work over  the field $\C$. We will consider subschemes  of $\proj^n$, assuming  $n\geq 3$.

\hfill\break Throughout this paper by $S$ we will denote a set   consisting of $n+2$ points in linear general position in $\proj^n$, by $\Gamma$ we will denote a rational normal curve in $\proj^n$,
and by $\F$ a rational normal scroll  in $\proj^n$. 

\hfill\break To clarify our  point of view we recall the description of rational normal scroll as shown in \cite[Chapter 2]{miles}.

Let $\underline{a}$ be  a multi-index of $d$ non-negative integers $\underline{a}=\{a_1, \ldots, a_d\}$ such that $\sum a_i=n-d+1$. 
By $\F_{\underline{a}}= \F(a_1, \ldots, a_d)$  we mean  the scroll $\proj (\bigoplus_{i=1}^d \Oh_{\proj^1}(a_i))$. 
It has coordinates $(y_1, \ldots, y_d),(t_0, t_1)$ and the  map $\F(a_1, \ldots, a_d) \to \proj^1$ corresponds to the coordinates $(t_0, t_1)$. Moreover the divisor class group is a free group generated by $L$, the pullback of $\Oh_{\proj^1}(1)$, and $M$ a relative hyperplane, which is characterised by the property that the locus $(y_i=0)$ is a divisor in the class $-a_i L +M$  (equivalently 
$M$ is the class of any monomial $t_0^{b}t_1^{c} y_i$ with $b+c= a_i$).
If $a_i> 0$ for some $i$, then  $|M|$ is a base point free system which induces a map $\varphi_{|M|}\colon \F(a_1, \ldots, a_d) \to \proj^n$,  birational onto its image $\F$;  if moreover $a_i > 0$ for every $i$, $\varphi_{|M|}$ is an embedding.
Since no confusion should arise, we will call rational normal scroll not only the variety $\F(a_1, \ldots, a_d)$, but also its birational image $\F$  in $\proj^n$. 
On the contrary 
if $\F \subset \proj^n$ is a rational normal scroll  obtaining by joining points on $d$ distinct rational normal curves lying in complementary linear subspaces (see e.g.  \cite[Example 8.26]{book-harris}),  then  there exists a unique smooth rational normal scroll $\F(a_1, \ldots, a_d)$ with $a_1 \geq 0$ for every $i$, such that $\F$ 
is the birational image of $\F(a_1, \ldots, a_d)$,  $M = \Oh_{\proj^n}(1)_{| \F}$ and   $L$ the pullback of $\Oh_{\proj^1}(1)$  via the standard projection
$\pi:\F \dashrightarrow \proj^1$. 

\hfill\break
Let $\F \subset \proj^n$ be a rational normal scroll of dimension $d$ and  degree $n-d+1$. Let $p$ be its Hilbert polynomial. 
By a family   of rational normal scrolls of dimension $d$ and  degree $n-d+1$  in $\proj^n$ we mean a subscheme of $\Hilb^{p}(\proj^n)$.

\hfill\break
Throughout the paper e will constantly use the following notation for families of scroll:
\begin{itemize}
\item[$\sG$] the family of rational normal curves in $\proj^n$;
\item[$\sG(Y)$] the family of rational normal curves in $\sG$ containing a given scheme $Y \subset \proj^n$;
\item[$\sG^{\vee}(\F)$] the family of rational normal curves in $\sG$ contained in a given scroll $\F \subset \proj^n$;
\item[$\sG^{\vee}(\F,k)$] 
 the family of rational normal curves $\Gamma$ in $\F$ such that the
induced map $\Gamma \to \proj^1$ has degree $k$;
\item[$\sF$] the family of rational normal scrolls of dimension $d$ and degree $n-d+1$ in $\proj^n$;
\item[$\sF(Y)$] the family of rational normal scrolls in $\sF$ containing a given scheme $Y \subset \proj^n$;
\item[$\sF_{\underline{a}}$] the family of rational normal scrolls in $\F$ such that $\F \iso  \varphi_{|M|}( \F(a_1, \ldots, a_d))$;
\item[$\sF_0$] the family  of balanced scrolls, i.e. the family $\sF_{\underline{a}}$ corresponding to the multi-index $\underline{a}$ such that $|a_i - a_1| \leq 1$;  note that this $\underline{a}$ is unique up to permutation;
\item[$\sF(Y)_{\underline{a}}$] the family $\sF(Y) \cap \sF_{\underline{a}}$;
\item[$\Aut(\underline{a})$] the group $\Aut (\bigoplus_{i=1}^d\Oh_{\proj^1}(a_i))$.
\end{itemize}


\section{Families of rational normal scrolls}

In  this section
we point out a  theorem which collects some well known results on families of scrolls and of rational normal curves
 in $\proj^n$. 
We believe that
most of these  results  were known since the 19th century, but we were not able to find  an adequate reference.

\begin{TEO}\label{dimensionvarie}

\begin{enumerate}\renewcommand\labelenumi{(\alph{enumi})}

\item
The dimension of the family $\sF$ of rational normal scrolls of dimension $d$ and degree $n-d+1$ in $\proj^n$ is $n^2 +2n -2-d^2$.

\item  The dimension of the family $\sF_{\underline{a}}$ of rational normal scrolls in $\sF$ birational to the scroll $\F(\underline{a})= \F(a_1, \ldots, a_d)$  is $n^2 +2n -2-d^2- (\dim \Aut(\underline{a})-d^2)$.

\item
The family $\sF_0$ corresponding to scrolls $\sF_{\underline{a}}$  such that $|a_i - a_1| \leq 1 \ \forall i > 1$  is dense in the family $\sF$ of rational normal scrolls of dimension $d$ and degree $n-d+1$ in $\proj^n$.

\item  The dimension of the family  $\sG$ of rational normal curves in $\proj^n$  is  $n^2+2n-3$.

\item
Let $S$ be a  set of $n+2$ points in linear general  position in $\proj^n$ . Then the family $\sG(S)$ of rational normal curves containing $S$ has dimension $(n-1)$.

\item
Let $S$ be a set of $n+2$ points  in linear general  position  in $\proj^n$. Let $\sF(S)_{\underline{a}}$ be  the
 family of rational normal scrolls  in  $\sF_{\underline{a}}$ containing  $S$. Then
$$ \dim \sF(S)_{\underline{a}} =(n+2)\cdot d - (d^2+2)-(\dim \Aut(\underline{a})-d^2) $$

\end{enumerate}
\end{TEO}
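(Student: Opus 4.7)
The plan is to handle all six parts around a single principle: $PGL(n+1)$ acts on $\sF$ and $\sG$, with each stratum $\sF_{\underline{a}}$ forming a single orbit, so dimensions follow from orbit--stabilizer; parts (e) and (f) then reduce to incidence-correspondence counts.

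For (a) and (b), I would start by observing that any two scrolls in $\sF_{\underline{a}}$ are related by a projective transformation, since both embeddings of $\F(\underline{a})$ are determined by the complete linear system $|M|$ up to a choice of basis of $H^0(M)$. Hence $\sF_{\underline{a}}$ is a single $PGL(n+1)$-orbit, and the stabilizer of a fixed $\F \in \sF_{\underline{a}}$ is naturally identified with $\Aut(\F)$ via its induced action on $H^0(\F, \Oh_{\F}(1))$. Since $\Aut(\F(\underline{a})) \cong (\Aut(\bigoplus_i \Oh(a_i)) / \C^*) \rtimes \Aut(\proj^1)$, this stabilizer has dimension $\dim \Aut(\underline{a}) + 2$, so orbit--stabilizer yields
\[
\dim \sF_{\underline{a}} = \bigl((n+1)^2 - 1\bigr) - \bigl(\dim \Aut(\underline{a}) + 2\bigr) = n^2 + 2n - 2 - \dim \Aut(\underline{a}),
\]
matching (b). For (a), I would specialize to balanced $\underline{a}$: an entrywise count of $\Hom(\Oh(a_j), \Oh(a_i))$ gives $\dim \Aut(\underline{a}) = d^2$.

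For (c), I would combine (b) with the classical fact that, in any flat family of rank-$d$, degree-$n-d+1$ bundles on $\proj^1$, every splitting type specializes from the balanced one; by (b) the balanced stratum has the strictly largest dimension, so $\sF_0$ is dense in $\sF$. Part (d) mirrors (a): $\sG$ is a single $PGL(n+1)$-orbit with stabilizer $\Aut(\Gamma) = PGL(2)$, giving $\dim \sG = (n+1)^2 - 1 - 3 = n^2 + 2n - 3$. For (e), I would use that $n+3$ points in linear general position determine a unique rational normal curve: with $S$ fixed, the universal curve over $\sG(S)$ projects to $\proj^n$ by $(\Gamma, p) \mapsto p$, dominantly with $1$-dimensional generic fiber (the position of $p$ on $\Gamma$), so $\dim \sG(S) = n - 1$.

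For (f), I would set up the analogous incidence
\[
J_{\underline{a}} = \{(\F, S) : S \subset \F\} \subset \sF_{\underline{a}} \times \mathrm{Conf}_{n+2}(\proj^n).
\]
The fiber over $\F \in \sF_{\underline{a}}$ is open in $\F^{n+2}$ of dimension $d(n+2)$, so $\dim J_{\underline{a}} = \dim \sF_{\underline{a}} + d(n+2)$. Granted that $n+2$ points in linear general position impose $(n+2)(n-d)$ independent conditions on $\sF_{\underline{a}}$, the projection to $\mathrm{Conf}_{n+2}(\proj^n)$ is dominant onto an open subset of linearly general configurations, and the generic fiber has dimension $\dim \sF_{\underline{a}} - (n-d)(n+2)$, which simplifies via (b) to the stated formula.

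The main obstacle is the independence-of-conditions claim underlying (f): for $S$ in linear general position one must verify that the conditions $p_i \in \F$ ($i = 1, \ldots, n+2$) cut out codimension $(n-d)(n+2)$ in $\sF_{\underline{a}}$. A convenient route is to build, for any $n+1$ prescribed linearly general points and one generic additional point, an explicit scroll in $\sF_{\underline{a}}$ containing the former and missing the latter. Part (c) is also technical, relying on the classical specialization theorem for bundles on $\proj^1$.
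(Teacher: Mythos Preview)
Your treatment of (a), (b), and (d) is the same as the paper's: orbit--stabilizer for the $PGL(n+1)$-action on $\sF_{\underline a}$, with the stabilizer identified with $\Aut(\F(\underline a))$ via the sequence $1 \to \proj(\Aut(\underline a)) \to \Aut(\F(\underline a)) \to \Aut(\proj^1) \to 1$. Your argument for (e) via Castelnuovo's lemma is fine (the paper simply refers to the literature), though your bookkeeping on the fiber is reversed: the fiber of $(\Gamma,p)\mapsto p$ over a general $p\in\proj^n$ is $\sG(S\cup\{p\})$, a single point by Castelnuovo; the one-dimensional fibers sit over $\sG(S)$.

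The genuine divergence is at (f). You compute $\dim J_{\underline a}=\dim\sF_{\underline a}+d(n+2)$ and want to read $\dim\sF(S)_{\underline a}$ off the fiber over $S$; as you flag, this needs the projection $J_{\underline a}\to\mathrm{Conf}_{n+2}(\proj^n)$ to be dominant. Your proposed fix---exhibiting a scroll through $n+1$ of the points that misses a generic $(n+2)$-th---shows only that each successive condition is nontrivial, not dominance. The paper sidesteps the issue entirely by reversing the parametrization: since $PGL(n+1)$ acts simply transitively on ordered $(n+2)$-tuples in linear general position, fixing $S$ and varying the embedded scroll is the same as fixing the abstract $\F(\underline a)$ and varying an ordered $(n+2)$-tuple on it (in the open set $U\subset\F(\underline a)^{n+2}$ of tuples imposing independent conditions on $|M|$), modulo $\Aut(\F(\underline a))$. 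Thus $\sF(S)_{\underline a}\cong U/\Aut(\F(\underline a))$ and $\dim\sF(S)_{\underline a}=d(n+2)-\dim\Aut(\F(\underline a))$ drops out immediately, with nonemptiness (hence dominance in your picture) automatic.

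For (c) the paper does not argue from the dimension comparison alone---that would require knowing in advance that $\sF$ is irreducible. Instead it writes down, in a separate proposition, an explicit one-parameter family inside the auxiliary $(d+1)$-dimensional scroll $\F(a_1-1,a_2,\ldots,a_d,1)\subset\proj^{n+1}$ that degenerates $\F(a_1,\ldots,a_d)$ to $\F(a_1-1,a_2,\ldots,a_{d-1},a_d+1)$ within a pencil in $|M'|$. Your appeal to the classical specialization of splitting types on $\proj^1$ is the same statement packaged abstractly.
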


\begin{proof}
Fix a suitable multi-index $\underline{a}$ and assume at first that $d \neq 2 $ or $d=2$ and  $a_1 \neq a_2$. The points of $\sF_{\underline{a}}$ parametrise the images of all possible morphisms $g: \F(\underline{a}) \to \proj^n$ given by the complete linear series $|M|$.
Moreover It is clear that all rational normal scrolls $X \in \F_{\underline{a}}$ are projectively equivalent in $\proj^n$.

Therefore fixing a morphism $g_0: \F(\underline{a}) \to \proj^n$, the family $\sF_{\underline{a}}$ is isomorphic to the quotient
$\operatorname{PGL}(n+1, \mathbb{K})/\Aut$, where the group $\Aut$ is an algebraic subgroup of $\operatorname{PGL}(n+1, \mathbb{K})$  fixing the scroll $g_0(\F(\underline{a}))$. In particular $\sF_{\underline{a}}$ is an irreducible quasi projective variety.

To complete the proof we must evaluate the dimension of $\Aut$. With this aim we will show that any automorphism of $\F(\underline{a})$ can be lifted to an element of $\operatorname{PGL}(n+1, \mathbb{K})$, i.e. $\Aut \iso \Aut(\F(\underline{a}))$, and we will estimate $\dim \Aut(\F(\underline{a}))$.

Since $\F(\underline{a}) \ncong \F(a,a)=\proj^1 \times \proj^1$ there is a unique projection map $\F(\underline{a}) \to \proj^1$, thus any automorphism on $\F(\underline{a})$ induces an automorphism on $\proj^1$. It is clear that the converse holds: any automorphism on $\proj^1$ can be lifted to an automorphism on $\F(\underline{a})$ and to an automorphism in $\Aut$.

Consider the following exact sequence
$$0 \to \Aut_0 \to \Aut(\F(\underline{a})) \to \Aut(\proj^1) \to 0 $$
where $\Aut_0$ is the subgroup of automorphisms of $\F(\underline{a})$ commuting with the projection to $\proj^1$. It is  immediately seen that $\Aut_0 \iso \proj (\Aut(\underline{a}))= \proj( \Aut (\bigoplus_{i=1}^d\Oh_{\proj^1}(a_i)))$ (see \cite[Proposition II.7.12]{Ha}) and  every automorphism can be lifted to an element of $\operatorname{PGL}(n+1, \mathbb{K})$. A simple induction argument shows that
\begin{equation}\label{d^2}\dim \Aut(\underline{a})\geq d^2\end{equation}
 and equality holds if and only if $|a_i-a_1|\leq 1$ for every $i$.

We conclude that \begin{eqnarray*}\dim \sF_{\underline{a}} &=& ((n+1)^2-1)- (\dim \Aut (\bigoplus_{i=1}^d\Oh_{\proj^1}(a_i))-1) -3\\
&=& n^2 +2n -2-d^2 -(\dim \Aut (\bigoplus_{i=1}^d\Oh_{\proj^1}(a_i))-d^2)\end{eqnarray*}
This exactly means that
$$\dim \sF_{\underline{a}} \leq n^2 +2n -2-d^2. $$
with equality holding if and only if $|a_i-a_1|\leq 1$ for every $i$.

It remains the case $\underline{a}=\{a,a\}$. In this case we need to take into account also  the automorphism of $\F(a,a) \iso \proj^1 \times \proj^1$ swapping the two $\proj^1$'s. See \cite{BK} for the details.

\hfill\break
The dimension of the family $\sG$ of rational normal curves is just a particular case of the case of rational normal scrolls.  See for example \cite{CoFre}.

\hfill\break
Finally, $\sF(S)_{\underline{a}}$ is isomorphic to the family $U / \Aut(\F(a_1, \ldots, a_d))$, where $U$ is the open subscheme of $\F(a_1, \ldots, a_d)^{n+2}$ given by $n+2$ points  imposing independent conditions on $|M|$.
Therefore
$$ \dim \sF(S)_{\underline{a}} =\dim \F(a_1, \ldots, a_d)^{n+2} - \dim  \Aut(\F(a_1, \ldots, a_d))$$
and we may conclude. 
\end{proof}

\begin{PROP}\label{H0_denso} The family $\sF_0$ of balanced scrolls is dense in the family $\sF$ of rational normal scrolls of dimension $d$ and degree $n-d+1$ in $\proj^n$.
\end{PROP}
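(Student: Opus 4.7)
The plan is to combine the dimension calculation of Theorem \ref{dimensionvarie} with the classical semi-continuity of splitting type for vector bundles on $\proj^1$. First I would recall from the proof of that theorem the stratification $\sF = \bigsqcup_{\underline{a}} \sF_{\underline{a}}$ into irreducible $\operatorname{PGL}(n+1)$-orbits, together with the inequality (\ref{d^2}) forcing $\dim \sF_{\underline{a}} \leq n^2+2n-2-d^2$ with equality precisely when $\underline{a}$ is balanced. Thus $\sF_0$ is a top-dimensional stratum, and density follows once I show that every other stratum $\sF_{\underline{a}}$ lies in $\overline{\sF_0}$.

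For this, I would use the classical fact that in a flat family of rank $d$ and degree $n-d+1$ vector bundles on $\proj^1$, the splitting type is upper semi-continuous and the balanced type is an open (generic) condition. Concretely, given an unbalanced $\underline{a}$ with $a_1 \geq \cdots \geq a_d$ and $a_1 \geq a_d + 2$, the group
$\Ext^1\bigl(\Oh_{\proj^1}(a_1), \Oh_{\proj^1}(a_d)\bigr) \iso H^1\bigl(\proj^1, \Oh_{\proj^1}(a_d - a_1)\bigr)$
is non-zero, and a generic non-trivial extension class provides a flat family $\sE$ of rank-$d$ bundles on $\proj^1 \times \aff^1$ whose special fiber equals $\bigoplus_i \Oh_{\proj^1}(a_i)$ and whose general fiber has a strictly more balanced splitting type. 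Iterating this pairwise rebalancing finitely many times exhibits $\bigoplus_i\Oh_{\proj^1}(a_i)$ as a specialization of a balanced bundle. Projectivising and embedding via the relatively very ample system $|M|$ converts this into a flat family in $\Hilb^p(\proj^n)$ whose general fiber lies in $\sF_0$ and whose special fiber equals $\F(\underline{a})$, so $\F(\underline{a}) \in \overline{\sF_0}$.

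Finally, since each $\sF_{\underline{a}}$ is a single $\operatorname{PGL}(n+1)$-orbit and the closure $\overline{\sF_0}$ is $\operatorname{PGL}(n+1)$-invariant, the inclusion of one point $\F(\underline{a}) \in \overline{\sF_0}$ forces $\sF_{\underline{a}} \subseteq \overline{\sF_0}$; combining over all $\underline{a}$ gives $\overline{\sF_0} = \sF$, establishing density. The main technical obstacle will be verifying that the abstract bundle deformation yields a flat family inside $\Hilb^p(\proj^n)$ rather than merely a family of abstract scrolls: this reduces to checking the constancy of $h^0(\proj^1, \sE_t \otimes \Oh_{\proj^1}(k))$ along the base for each $k \geq 0$, which is in turn guaranteed by constancy of the Euler characteristic together with the vanishing of $H^1$ for all symmetric powers appearing in the deformation.
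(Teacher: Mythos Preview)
Your argument is correct and follows a well-trodden but genuinely different path from the paper's proof. You invoke the abstract deformation theory of vector bundles on $\proj^1$: a non-trivial class in $\Ext^1(\Oh_{\proj^1}(a_1),\Oh_{\proj^1}(a_d))$ (nonzero because $a_1\geq a_d+2$) furnishes a one-parameter family of rank-$d$ bundles specialising to $\bigoplus_i\Oh_{\proj^1}(a_i)$, with strictly more balanced generic fibre; projectivising and embedding by $|M|$ then gives the required family in $\Hilb^p(\proj^n)$, and you finish with the $\operatorname{PGL}(n+1)$-orbit argument. The paper instead works entirely in coordinates: it embeds both $\F(a_1,\ldots,a_d)$ and $\F(a_1-1,a_2,\ldots,a_{d-1},a_d+1)$ as explicit divisors in the linear system $|M'|$ on the $(d+1)$-dimensional scroll $\F(a_1-1,a_2,\ldots,a_d,1)\subset\proj^{n+1}$, and writes down a pencil $\{t_0y_{d+1}=\lambda t_1^{a_1-1}y_1+t_1^{a_d}y_d\}$ interpolating between them. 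The explicit construction has the advantage of producing the embedded flat family directly inside a fixed projective space, bypassing the ``main technical obstacle'' you identify; your approach is more conceptual and makes the role of semi-continuity transparent, at the cost of having to check (as you note) that the abstract bundle deformation globalises to a flat family of \emph{embedded} scrolls. Two small points: when some $a_i=0$ the system $|M|$ is only birational onto its image, not very ample, so ``embedding'' should be read as ``birational morphism'' there; and the cohomology groups you need constant are $h^0(\proj^1,\Sym^m\sE_t)$ for $m\geq 0$, not $h^0(\proj^1,\sE_t\otimes\Oh_{\proj^1}(k))$ --- but you catch this yourself in the last clause.
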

\begin{proof} (See  \cite[chapter 2,Exercises 22, 23]{miles}).

Consider a multi-index $\underline{a}=(a_1, \ldots, a_d)$, and assume that $a_i \geq 0$ for every $i$ and $a_1 \geq 1$. We do not need to assume that $a_1 \leq a_2\leq \ldots \leq a_d$.
We will prove that there is a family of scrolls in $\proj^n$, each of them  image of $\F(a_1, \ldots, a_d)$,  which degenerates to a scroll which is image of $\F(a_1-1, a_2, \ldots, a_{d-1}, a_d+1)$. This is clearly enough to conclude the statement.

Consider at first the ($d+1$)-dimensional scroll $$\F^{d+1}=\F(a_1-1, a_2, \ldots, a_{d-1}, a_d,1) \to \proj^{n+1}$$
and let $M'$ be the relative hyperplane divisor.
 The scroll $\F(a_1, \ldots, a_d)$ belongs to the linear system $|M'|$ in $\F^{d+1}$
 since we have the embedding
$$\begin{matrix}
\varphi_1: & \F(a_1, \ldots, a_d)& \into & \F(a_1-1, a_2, \ldots, a_{d-1}, a_d,1)\\
& (x_1, \ldots, x_d)\times (t_0, t_1) & \mapsto& (t_0 x_1, x_2, \ldots, x_d, t_1^{a_1-1}x_1) \times (t_0, t_1)\\
\end{matrix}$$
Note that  $\im (\varphi_1)=\{t_0 y_{d+1}=t_1^{a_1-1}y_1\} \subset \F^{d+1}$.\\

Similarly we have a second embedding  

$$\begin{matrix}
\varphi_2: & \F(a_1-1, a_2, \ldots, a_{d-1}, a_d+1)& \into & \F(a_1-1, a_2, \ldots, a_{d-1}, a_d,1)\\
& (x_1, \ldots, x_d)\times (t_0, t_1) & \mapsto& (x_1, x_2, \ldots, t_0 x_d, t_1^{a_d}x_d) \times (t_0, t_1)
\end{matrix}$$
It is clear that $\im (\varphi_2)=\{t_0 y_{d+1}=t_1^{a_d}y_d\} \subset \F^{d+1}$.\\

Consider now the family of divisors in $|M'|$
given by the equation $$\{t_0 y_{d+1}= \lambda t_1^{a_1-1}y_1 +
t_1^{a_d}y_d\}  \mbox { for  } \lambda \in \C. $$
For $\lambda \neq 0$ we have a family of scrolls isomorphic to $\im (\varphi_1)$, which degenerates to  $\im (\varphi_2)$ when
$\lambda=0$.
\end{proof}

\section{Rational normal curves contained in  scrolls}



First of all let us recall the following characterisation  of rational normal curves contained in a scroll.
\begin{REM}\label{coordinate} Fix a multi-index $\underline{a}=\{a_1, \ldots, a_d\}$,  consider a  scroll  $\F(a_1, \ldots, a_d)$, and
the associated morphism: 
$\varphi: \F(a_1, \ldots, a_d) \to \proj^n$.

 Let $\F:= \varphi(\F(a_1, \ldots, a_d))$. By the birationality of $\varphi$,  a rational normal curve $\Gamma \subset \F$ corresponds to a unique embedding $\psi: \Gamma \into \F(a_1, \ldots, a_d)$ which can be explicitly written with respects to the coordinates $(s_0, s_1)$ on $\Gamma$:
\begin{center} $\begin{matrix}
\psi: & \Gamma & \to & \F(a_1, \ldots, a_d)\\
& (s_0, s_1) & \mapsto & (y_1(s_0,s_1), \ldots, y_d(s_0,s_1)) \times (t_0(s_0,s_1), t_1(s_0, s_1))
\end{matrix}$\end{center}
The induced map $\Gamma \to \proj^1$ is the composition:
\begin{center} $\begin{matrix}
\Gamma & \to & \proj^1\\
(s_0, s_1) & \mapsto & (t_0(s_0,s_1), t_1(s_0, s_1))
\end{matrix}$\end{center}
whose degree is $\deg t_0(s_0,s_1)= \deg t_1(s_0, s_1)=k$.

Since $\Gamma$ is a rational normal curve  in $\proj^n$ we have that $\deg (\Oh_{\proj^n}(1)_{|\Gamma})=\Gamma \cdot M=n$. The divisor $(y_i=0)\subset \F(a_1, \ldots, a_d)$ is in the linear system $|M-a_i L|$, hence $\deg y_i(s_0, s_1)=n-a_i k$.

\end{REM}

We are going  to study  family of rational normal curves  contained in a fixed scroll and conversely study rational normal scrolls
containing some  rational curves. 
 We start  with the following obvious remark.

\begin{REM}\label{proj-scroll} Let $\F$ be a rational normal scroll  of minimal degree in $\proj^n$ and let $P_0 \in \F$. Consider the projection map  from $P_0$
onto a general hyperplane 
$$\pi_{P_{0}} : \proj^n  \dashrightarrow \proj^{n-1}.$$
Then $\overline{\pi_{P_0}(\F)}$ is still a rational scroll of minimal degree in $\proj^{n-1}$. Moreover if $P_0$ is smooth then
$\dim (\overline{\pi_{P_0}(\F)}) = \dim (\F)$, whereas if $P_0$ is  singular then $\dim (\overline{\pi_{P_0}(\F)}) = \dim (\F) -1$.

\end{REM}

Our analysis is subdivided in  several steps.

\begin{PROP}\label{curve_in_scroll} Let $\F(a_1, \ldots, a_d)$ be a scroll of dimension $d$  and let $\F $ be its image  in $\proj^n$.

Let $\sG^{\vee}(\F,k)$ be 
 the family of rational normal curves $\Gamma$ in $\F$ such that the
induced map $\Gamma \to \proj^1$ has degree $k$. Then
\begin{itemize}
\item 
$\sG^{\vee}(\F,k)$  is irreducible and has dimension $(d-1)(n+3-k)+(k-1)(2d-n)$ if $k \leq d$ and $n-a_i k \geq 0$ for
every $i$ ;
\item 
$\sG^{\vee}(\F,k) =\emptyset$ otherwise.

\end{itemize} 
 \end{PROP}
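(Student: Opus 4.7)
The argument rests on the explicit parametrisation from Remark \ref{coordinate}: a rational normal curve $\Gamma\subset\F$ with $\deg(\Gamma\to\proj^1)=k$ arises as the image of a unique morphism $\psi\colon\proj^1\hookrightarrow\F(a_1,\ldots,a_d)$ determined by a tuple $(t_0,t_1;y_1,\ldots,y_d)$, with $t_j\in H^0(\Oh_{\proj^1}(k))$ and $y_i\in H^0(\Oh_{\proj^1}(n-a_ik))$ (each block defined up to a common scalar), and subject to the requirement that $\varphi_{|M|}\circ\psi$ embed $\proj^1$ non-degenerately in $\proj^n$. Two such tuples define the same $\Gamma$ precisely when they differ by an element of $\Aut(\proj^1)$ acting on the source $\proj^1$.

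To rule out the forbidden cases, suppose first that $n-a_ik<0$ for some $i$ with $a_i>0$. Then $H^0(\Oh_{\proj^1}(n-a_ik))=0$ forces $y_i\equiv 0$, so $\Gamma$ lies in the subscroll $\{y_i=0\}$, whose image under $\varphi_{|M|}$ is contained in the proper linear subspace of $\proj^n$ defined by the vanishing of the coordinates $t_0^{b}t_1^{c}y_i$, $b+c=a_i$; hence $\Gamma$ is degenerate. If instead $k>d$, then a general fibre $F\cong\proj^{d-1}$ of the projection $\F\dashrightarrow\proj^1$ meets $\Gamma$ in $k$ points which, lying on the rational normal curve $\Gamma$, are in linearly general position in $\proj^n$ and thus span a $\proj^{k-1}$, contradicting their containment in the $(d-1)$-plane $F$. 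Either obstruction yields $\sG^{\vee}(\F,k)=\emptyset$.

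Assume now $k\leq d$ and $n-a_ik\geq 0$ for every $i$. The pair $(t_0,t_1)$ varies in an open subset of $\proj\bigl(H^0(\Oh_{\proj^1}(k))^{\oplus 2}\bigr)=\proj^{2k+1}$, while for each such pair the lifts of the resulting $\tau\colon\proj^1\to\proj^1$ to morphisms $\proj^1\to\F(\underline{a})$ satisfying $\Gamma\cdot M=n$ correspond to surjections $\tau^{*}\bigl(\bigoplus_i\Oh_{\proj^1}(a_i)\bigr)\twoheadrightarrow\Oh_{\proj^1}(n)$, hence to points of an open subset of $\proj\bigl(\bigoplus_iH^0(\Oh_{\proj^1}(n-a_ik))\bigr)$ of dimension $\sum_{i}(n-a_ik+1)-1=d(n+1)-k(n-d+1)-1$. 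The total parameter space is therefore an irreducible open subset of a product of two projective spaces, and the connected group $\Aut(\proj^1)=\operatorname{PGL}_2$ acts on it generically freely with quotient $\sG^{\vee}(\F,k)$; so this family is irreducible of dimension
\[
(2k+1)+d(n+1)-k(n-d+1)-1-3=(d-1)(n+3-k)+(k-1)(2d-n),
\]
as a direct expansion confirms.

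The main technical point I expect to require the most care is to verify that the open conditions defining a genuine rational normal curve of degree $n$ --- no common zero of $(t_0,t_1)$, no common zero of $(y_1,\ldots,y_d)$, injectivity of $\psi$, and non-degeneracy of $\varphi_{|M|}\circ\psi$ in $\proj^n$ --- actually cut out a non-empty subset under the two hypotheses. This is where both conditions are used in tandem: the inequality $n-a_ik\geq 0$ ensures each $y_i$ can be chosen non-zero, while $k\leq d$ ensures that for generic data the $n+1$ monomials $\{t_0^{b}t_1^{c}y_i\}_{b+c=a_i,\,1\leq i\leq d}$ remain linearly independent in $H^0(\Oh_{\proj^1}(n))$, so that $\varphi_{|M|}\circ\psi$ is non-degenerate of degree one onto its image; once this non-emptiness is established, the irreducibility and dimension of $\sG^{\vee}(\F,k)$ follow from the quotient description above.
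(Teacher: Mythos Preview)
Your proof is correct and follows essentially the same route as the paper: parametrise $\Gamma$ by the polynomial data $(t_0,t_1;y_1,\ldots,y_d)$ of prescribed degrees from Remark~\ref{coordinate}, quotient by $\Aut(\proj^1)$ and the two scalar ambiguities, and read off both the dimension and irreducibility from the resulting open subset of a product of projective spaces (the paper phrases this as ``an open set of a vector space modulo $\Aut(\Gamma)\times(\C^*)^2$'', which is equivalent). Your treatment of the exclusion cases and of the non-emptiness of the open locus is in fact slightly more explicit than the paper's, but the argument is the same.
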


 \begin{proof} Notice that
 there can not be a curve $\Gamma$ with $k > d$ since the general  fibre of  $\Gamma \to \proj^1$ consists of $k$ points
in linear general  position, contained in a fibre of $\F(a_1, \ldots, a_d)$, which is a $(d-1)-$dimensional projective
subspace of $\proj^n$. Therefore from now on we may assume that $k \leq d$.\\

 We have seen in Remark \ref{coordinate} that a rational normal curve $\Gamma \subset \F$ is determined by its coordinates
 \begin{center} $\begin{matrix}
\psi: & \Gamma & \to & \F(a_1, \ldots, a_d)\\
& (s_0, s_1) & \mapsto & (y_1(s_0,s_1), \ldots, y_d(s_0,s_1)) \times (t_0(s_0,s_1), t_1(s_0, s_1))
\end{matrix}$\end{center}
Such polynomials can not exist if $\deg y_i=n-k a_i <0$ for some $i$, hence in this case there are no rational curves
with the required properties. On the contrary if  $\deg y_i=n-k a_i \geq 0$ for every $i$ then the family is nonempty.

It is clear that the coordinates $\{y_1, \cdots,y_d, t_0, t_1\}$ are unique up to the action of $\Aut(\Gamma)$ and of $({\C^{\ast}})^2$ on $\F(a_1, \ldots,
a_d)$.
Therefore we get 
\begin{eqnarray*}\dim&=& \sum_{i=1}^d h^0(\proj^1, \Oh_{\proj^1}(n-k a_i)) + 2 h^0(\proj^1, \Oh_{\proj^1}(k))
-3-2\\
&=&(d-1)(n+3-k)+(k-1)(2d-n)
\end{eqnarray*}
This family is clearly irreducible, since it is dominated by an open set of a vector space.
\end{proof}

Now we analyse the simplest cases of rational normal scrolls:  hypersurface scrolls, i.e.,   quadrics of rank $3$ or $4$.
\begin{LEM} Let $Q= \{q(x_0, \ldots, x_n)=0\}$  be a quadric of rank 3 or 4 in $\proj^n$. Let $S \subset Q$ be a set of $n+2$ points in linear general  position in $\proj^n$ and assume that
$$ S \cap \text{Sing}(Q) = \emptyset.$$

Then there exists at most a finite number of rational normal curves $\Gamma$ such that $S \subset \Gamma \subset Q $.
\end{LEM}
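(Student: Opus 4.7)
The plan is a parameter count via Proposition~\ref{curve_in_scroll} followed by an infinitesimal transversality argument.

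First, I would recognise $Q$ as a rational normal scroll of dimension $d=n-1$: a quadric of rank~$3$ in $\proj^n$ is the birational image under $\varphi_{|M|}$ of $\F(0,\ldots,0,2)$, and one of rank~$4$ is the birational image of $\F(0,\ldots,0,1,1)$. Proposition~\ref{curve_in_scroll} then gives, for every admissible $k$ (namely $k\le n/2$ in the rank~$3$ case and $k\le n-1$ in the rank~$4$ case),
\[
\dim \sG^{\vee}(Q,k) \;=\; (d-1)(n+3-k)+(k-1)(2d-n) \;=\; (n-2)(n+2) \;=\; n^2-4,
\]
independently of $k$.

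Second, for each admissible $k$ I would form the incidence variety
\[
\sI_k \;=\; \{(\Gamma,P_1,\ldots,P_{n+2})\in \sG^{\vee}(Q,k)\times Q^{n+2} : P_j\in\Gamma \text{ for every } j\}.
\]
The first projection has fibre $\Gamma^{n+2}$ of dimension $n+2$, so
\[
\dim \sI_k \;=\; (n^2-4)+(n+2)\;=\;n^2+n-2\;=\;(n-1)(n+2)\;=\;\dim Q^{n+2}.
\]
Hence the second projection $\sI_k\to Q^{n+2}$ is either non-dominant or generically finite; in either case its generic fibre is $0$-dimensional, so the finiteness conclusion holds over a dense open of $Q^{n+2}$.

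Third, to upgrade this to finiteness at the specific $S$ of the hypothesis I would argue infinitesimally. The tangent space to the fibre at a curve $\Gamma\in \sG^{\vee}(Q,k)$ containing $S$ is $H^0(\Gamma, N_{\Gamma/Q}(-S))$, and the conormal sequence $0\to N_{\Gamma/Q}\to N_{\Gamma/\proj^n}\to \Oh_\Gamma(2n)\to 0$, together with $N_{\Gamma/\proj^n}\cong \Oh_{\proj^1}(n+2)^{\oplus(n-1)}$, yields $\deg N_{\Gamma/Q}=(n-2)(n+1)$; so its $n-2$ summands have average degree $n+1$, and $N_{\Gamma/Q}(-S)$ has average summand degree $-1$. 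Using that $S$ is in linear general position and $S\cap\Sing(Q)=\emptyset$, I would argue that $N_{\Gamma/Q}$ is balanced along $S$, giving $N_{\Gamma/Q}(-S)\cong \Oh_{\proj^1}(-1)^{\oplus(n-2)}$ and $H^0=0$, so that $\Gamma$ is isolated in the fibre.

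The main obstacle is precisely this splitting-type control: $N_{\Gamma/Q}$ can become unbalanced if $\Gamma$ itself crosses $\Sing(Q)$, even though $S$ avoids it. I would handle this by a separate dimension count, observing that the subfamily of $\sG^{\vee}(Q,k)$ consisting of curves meeting $\Sing(Q)$ is a proper closed subvariety, and that the conditions of containing $S$ in linear general position, disjoint from $\Sing(Q)$, already cut this down to a $0$-dimensional locus.
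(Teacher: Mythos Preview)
Your global dimension count is correct and does recover the lemma for a \emph{generic} $(n+2)$-tuple on $Q$: with $d=n-1$ the formula of Proposition~\ref{curve_in_scroll} indeed collapses to $n^2-4$ for every admissible $k$, and $\dim\sI_k=\dim Q^{n+2}$. The gap is entirely in the passage from the generic $S$ to the specific $S$ of the hypothesis.

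The infinitesimal step does not go through as written. Even when $\Gamma$ avoids $\Sing(Q)$ so that $0\to N_{\Gamma/Q}\to N_{\Gamma/\proj^n}\to\Oh_\Gamma(2n)\to 0$ is exact, the splitting type of $N_{\Gamma/Q}$ is a property of the pair $(\Gamma,Q)$ and has nothing to do with $S$; the phrase ``balanced along $S$'' has no content, since twisting by $\Oh_\Gamma(-S)$ simply shifts every summand by $-(n+2)$. Because $N_{\Gamma/Q}\hookrightarrow\Oh(n+2)^{\oplus(n-1)}$, a summand of degree exactly $n+2$ is not excluded a priori, and any such summand gives $h^0(N_{\Gamma/Q}(-S))>0$. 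The hypotheses ``$S$ in linear general position'' and ``$S\cap\Sing(Q)=\emptyset$'' constrain $S$, whereas what you need is a constraint on each $\Gamma$ through $S$; so the argument is circular --- you would have to already know which curves through $S$ lie on $Q$ in order to control their normal bundles. Your proposed fix (a separate dimension count on the unbalanced/singular-meeting locus) is not carried out, and it is not clear that this locus, intersected with the condition of containing $S$, is $0$-dimensional: that is essentially the same transversality statement you are trying to prove.

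The paper sidesteps normal bundles entirely. It works inside the $(n-1)$-dimensional family $\sG(S)$, parametrised explicitly by $(a_2,\ldots,a_n)$ after putting $S$ in standard position. The condition $\Gamma\subset Q$ becomes the identical vanishing of a degree-$(n-2)$ polynomial $p(s_0,s_1)$ whose coefficients depend on the $a_i$; the hypothesis $S\cap\Sing(Q)=\emptyset$ is used precisely to ensure that for generic $(a_i)$ the pullback $q\circ\varphi_\Gamma$ has $2n$ \emph{simple} zeros (so $p\not\equiv 0$), and then an incidence-correspondence argument imposes the vanishing of $p$ at $n-1$ general points of $\proj^1$ one at a time, each step dropping the dimension by one.
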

\begin{proof} Assume that $S$ is the set of coordinate points $(0: \ldots: 0:1:0:\ldots, 0)$ in $\proj^n$ and  the point $(1: \ldots:1)$.\\

It is well known that, up to $\Aut(\proj^1)$, every rational normal curve $\Gamma$ through $S$ is given by the map
$$\begin{matrix}\varphi=\varphi_{\Gamma}:& \proj^1& \to& \proj^n\\
& (s_0, s_1) & \mapsto & (\frac{1}{s_0}:\frac{1}{s_0- s_1}:\frac{1}{s_0- a_2 s_1}:\ldots:\frac{1}{s_0-a_n s_1})
\end{matrix}$$
and that the numbers $a_i$ determine uniquely the curve. Since $S\subset Q$ we have that
\begin{equation}\label{parameters}
q(\varphi_{\Gamma}(s_0, s_1))= s_0 s_1 (s_0-s_1) \prod_{i=2}^n(s_0 - a_i s_1)\cdot p(s_0, s_1)
\end{equation}
where $p(s_0, s_1)$ is a degree $n-2$ polynomial in $(s_0, s_1)$ with coefficients depending on the parameters $a_i$.\\

The rational normal curves we are looking for correspond to  parameters $(a_2, \cdots, a_n) $ such that $ p(s_0, s_1) \equiv 0$ on $\proj^1$,
where $p$ is defined in  equation (\ref{parameters}) and it depends on $(a_2, \cdots a_n)$.
%
%
A simple Bertini's  type argument shows that for a general   choice of  the parameters $a_2, \cdots, a_n$ the polynomial $q(\varphi(s_0, s_1))$ has $2n$ distinct zeroes.
%
%
 Notice that the assumption $S \cap \text{Sing}(Q) = \emptyset$ is necessary since otherwise for every $\Gamma$ the polynomial
$q(\varphi_{\Gamma}(s_0,s_1))$ should vanish with order $\geq 2$ at  the preimage of every point in $S \cap \text{Sing}(Q)$.

Consider now  $n-1$ general points in in $\proj^1$: $T_1, \ldots, T_{n-1}$.   Let $\sG_0 =\sG(S)$ be  the family of rational normal curves in $\sG$ containing  $S$ and  for every $j=1,\cdots n-1$ define 
inductively $\sG_j$ to be the closure  in $\sG$ of  the family  of rational normal curves $\Gamma$ through $S$ such that $\varphi_{\Gamma}(T_1), \ldots, \varphi_{\Gamma}(T_j) \in Q \setminus S$. 
It is clear that $\sG_j$ corresponds to the family of parameters $\{a_2, \ldots, a_n\}$ such that
$$p(T_1)=\ldots =p(T_j)=0.$$

Notice that $\sG_{n-1}$   is the family  of rational normal curves contained in $Q$ since $\deg (p)= n-2$. 
Now it is $\dim \sG_0 =  n-1$  by Theorem  \ref{dimensionvarie}. Therefore 
the lemma follows if we show that
$$\dim \sG_{j+1}=\dim \sG_j - 1 . $$

Consider the incidence correspondence
$$I=\{(\Gamma, x): \varphi_{\Gamma}(x) \in Q \setminus S\} \subset \sG_j \times \proj^1 \setminus \{T_1, \ldots, T_j\}$$
The map $I \to \sG_j$ is generically finite thanks to the existence of generic map $\varphi$ we discuss above, thus $\dim I = \dim X_j$. Moreover the map $I \to \proj^1 \setminus \{T_1, \ldots, T_j\}$ is nonconstant by genericity  of  $\varphi$, thus the general   fibre has codimension 1.  Since the general   fibre
over $P_{j+1}$ is $\sG_{j+1}$  we  conclude.
\end{proof}

\begin{LEM}
 Let $Q= \{q(x_0, \ldots, x_n)=0\}$  be a quadric of rank 3 or 4 in $\proj^n$.  Let $S \subset Q$ be a set of $n+2$ points in  linear general  position in $\proj^n$. Then the family of rational normal curves $\Gamma$ such that $S \subset \Gamma \subset Q$ has dimension at most $\# (S \cap \text{Sing} (Q))$.

\end{LEM}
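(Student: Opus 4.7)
The proof parallels the preceding lemma closely, with a multiplicity count at the singular points as the new input. Adopt the same coordinates as before: $S$ consists of the coordinate points together with $(1{:}\ldots{:}1)$, and a curve $\Gamma \in \sG(S)$ is parametrized by $(a_2, \ldots, a_n)$, giving
\[
q(\varphi_\Gamma(s_0,s_1)) = s_0 s_1 (s_0-s_1)\prod_{i=2}^{n}(s_0 - a_i s_1)\cdot p(s_0, s_1)
\]
with $\deg p = n-2$. Write $m := \#(S \cap \Sing(Q))$. Since the quadratic form $q$ vanishes to order at least two at every point of $\Sing(Q)$, for each $P \in S \cap \Sing(Q)$ the pullback $q(\varphi_\Gamma)$ vanishes to order $\geq 2$ at the preimage $T_P \in \proj^1$ of $P$. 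The ``standard'' factor contributes only multiplicity one at each such $T_P$, so the residual polynomial $p$ is forced to vanish at each of these $m$ points.

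Since $\deg p = n-2$, the condition $p \equiv 0$ (equivalently $\Gamma \subset Q$) is equivalent to $p$ vanishing at $n-1$ distinct points; $m$ are automatic by the previous step, so imposing the remaining $n-1-m$ vanishing conditions at generic points suffices. Accordingly, fix general $T_1, \ldots, T_{n-1-m} \in \proj^1$ and define, for $j = 0, 1, \ldots, n-1-m$,
\[
\sG_j := \overline{\{\Gamma \in \sG(S): p_\Gamma(T_1) = \cdots = p_\Gamma(T_j) = 0\}},
\]
so that $\sG_{n-1-m}$ is precisely the family of curves $\Gamma$ with $S \subset \Gamma \subset Q$ in the statement. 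Running the incidence-correspondence argument from the preceding lemma --- applied to $I_j = \{(\Gamma, x) \in \sG_j \times (\proj^1 \setminus \{T_1, \ldots, T_j\}): \varphi_\Gamma(x) \in Q \setminus S\}$ --- yields $\dim \sG_{j+1} = \dim \sG_j - 1$ at each step. Starting from $\dim \sG_0 = \dim \sG(S) = n-1$ (Theorem \ref{dimensionvarie}(e)) and iterating $n-1-m$ times gives $\dim \sG_{n-1-m} \leq (n-1) - (n-1-m) = m$, which is the required bound.

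The one step needing attention is that the incidence argument really does drop the dimension by one at every $j < n-1-m$: this needs $\sG_j$ to contain a curve not entirely contained in $Q$, so that the projection $I_j \to \proj^1$ is dominant and the generic fibre has codimension one. After peeling off the $m$ forced zeros of $p$, the residual factor has degree $n-2-m$, and $j \leq n-2-m$ additional prescribed zeros cannot force it to vanish identically. A Bertini-type genericity argument in the $(a_2, \ldots, a_n)$-parameter space (identical to the one in the preceding lemma) therefore supplies a curve in $\sG_j$ whose associated polynomial $p_\Gamma$ is not identically zero, so the incidence argument applies without modification and the dimension estimate above goes through.
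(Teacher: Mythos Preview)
Your approach is genuinely different from the paper's. The paper does \emph{not} revisit the polynomial/incidence machinery at all for this lemma; instead it runs a short induction on $m=\#(S\cap\Sing Q)$. For the inductive step it picks $P\in S\cap\Sing(Q)$ and projects $\pi_P:\proj^n\dashrightarrow\proj^{n-1}$. Because $P$ is a singular point of $Q$, the quadratic form $q$ descends unchanged, so the curves $\Gamma$ with $S\subset\Gamma\subset Q$ are exactly the liftings through $P$ of rational normal curves in $\proj^{n-1}$ through $\pi_P(S\setminus\{P\})$ lying on $(q=0)$. Each such curve has a one–parameter family of liftings, and the projected configuration has $m-1$ points on the singular locus, so induction gives the bound. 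This is a five-line argument with no incidence correspondences.

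Your route---peel off $m$ forced zeros of $p$ and run the incidence argument for only $n-1-m$ steps---is a natural idea, but there is a real gap in the last paragraph. The incidence step ``$\dim\sG_{j+1}=\dim\sG_j-1$'' requires that the map $I_j\to\sG_j$ be generically finite, i.e.\ that on \emph{every top-dimensional irreducible component} of $\sG_j$ the general curve is not contained in $Q$. You only produce \emph{one} curve in $\sG_j$ with $p_\Gamma\not\equiv0$; that rules out $\sG_j\subset V$ but not the possibility that some top component $Z\subset\sG_j$ lies entirely in $V=\{\Gamma\subset Q\}$. In the $m=0$ case this same informality is present in the paper's previous lemma, but there it is harmless heuristically because the target bound is $\dim V\le 0$. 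When $m>0$ the issue becomes exactly the point at stake: a component $Z\subset V$ of dimension $>m$ would sit inside every $\sG_j$ without ever losing dimension, and your chain of inequalities collapses to the tautology $\dim V\le\max(\dim V,m)$. (Relatedly, the ``forced zeros'' lie at the preimages $(a_{i}:1)$, which \emph{move} with $\Gamma$ whenever the singular points of $S$ are not among the three with fixed preimages $(0{:}1),(1{:}1),(1{:}0)$; so the factorisation $p=r\cdot\tilde p$ is over $\mathbb{C}(a)$, and knowing that $\tilde p$ has degree $n-2-m$ only tells you $V$ is cut out by $n-1-m$ equations on $\mathbb{A}^{n-1}$, which gives $\dim V\ge m$, the wrong inequality.) The projection argument in the paper sidesteps this circularity completely, which is what makes it the cleaner proof here.
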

\begin{proof} It is a simple induction argument on $\# (S \cap \text{Sing} (Q))$. If $\# (S \cap \text{Sing} (Q))=0$ we are back to the previous Lemma. Assume that $\# (S \cap \text{Sing} (Q))\neq 0$.

Consider one point $P \in S \cap \text{Sing} (Q)$ and project away from it, $\pi_P: \proj^n \dashrightarrow \proj^{n-1}$. The degree 2 polynomial $q$ does not change via the projection since it is singular at $P$. Then the family of rational normal curves $\Gamma$ such that $S \subset \Gamma \subset Q$ is obviously given by all the possible liftings through $P$ of the rational normal curves $\Gamma \subset \proj^{n-1}$ such that $\pi_P(S\setminus P) \subset \Gamma \subset (q=0)$. Since every curve in $\proj^{n-1}$ has a one dimensional family of liftings, we conclude by induction.
\end{proof}

\hfill\break
Now we  give an estimate of the dimension of the family of rational normal curves passing through  $n+2$ general   points and contained in a  fixed rational normal scroll.
\begin{PROP}\label{number_rnc} Let $\F$ be a rational normal scroll of dimension $d$ and minimal degree in $\proj^n$, with $n \geq d+1$. Let $S \subset \F$ a set of $n+2$ points in linear general  position in $\proj^n$. Then the family  of rational normal curves $\Gamma$ such that $S \subset \Gamma \subset \F$ has dimension at most $\# (S \cap \text{Sing} (\F))$.

\end{PROP}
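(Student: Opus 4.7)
The plan is to induct on $n$, following the pattern of the previous Lemma. The base case is $n = d+1$: then $\F$ is a hypersurface of degree $2$, and the constraint $\sum a_i = 2$ on its multi-index forces $\underline{a} = (0,\ldots,0,2)$ or $(0,\ldots,0,1,1)$. Hence $\F$ is a cone over a smooth conic or over $\proj^1 \times \proj^1$, i.e.\ a quadric of rank $3$ or $4$, and the preceding Lemma applies.

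For the inductive step ($n \geq d+2$), I distinguish two cases according to whether $S$ meets $\Sing(\F)$. If $S \cap \Sing(\F) = \emptyset$, pick any $P \in S$ and project from $P$: by Remark \ref{proj-scroll}, $\F' := \overline{\pi_P(\F)}$ is a rational normal scroll of minimal degree and dimension $d$ in $\proj^{n-1}$, and $S' := \pi_P(S \setminus \{P\})$ is a set of $(n-1)+2$ points in linear general position, disjoint from $\Sing(\F')$ (any new singularity of $\F'$ could only come from images of lines on $\F$ through $P$, which $S'$ avoids by general position). Every rational normal curve $\Gamma \subset \F$ through $S$ projects to a rational normal curve $\Gamma' \subset \F'$ through $S'$. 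I claim the assignment $\Gamma \mapsto \Gamma'$ has $0$-dimensional fibres: given $\Gamma'$, the tangent direction of any lift $\Gamma$ at $P$ is the point of $\Gamma'$ corresponding to $P$ under the extended projection, and this point must lie in $\proj(T_P\F) \cong \proj^{d-1} \subset \proj^{n-1}$; since $\proj(T_P\F)$ has codimension $n - d \geq 2$, the intersection $\Gamma' \cap \proj(T_P\F)$ is $0$-dimensional, so only finitely many lifts exist. Induction yields $\dim(\text{family downstairs}) \leq \#(S' \cap \Sing(\F')) = 0$, whence $\dim(\text{family upstairs}) = 0$.

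If instead there exists $P \in S \cap \Sing(\F)$, project from such a $P$. The singular locus of a minimal-degree scroll equals the vertex $V$ of its cone structure (a linear $\proj^{r-1}$ corresponding to the vanishing indices $a_i = 0$); hence $P \in V$ and $\F$ is a cone from $P$ over $\F'$. By Remark \ref{proj-scroll}, $\F'$ has dimension $d-1$ in $\proj^{n-1}$, and $\Sing(\F') = \pi_P(V \setminus \{P\})$. General position of $S$ ensures $\pi_P$ is injective on $(S \cap \Sing(\F)) \setminus \{P\}$, so $\#(S' \cap \Sing(\F')) = \#(S \cap \Sing(\F)) - 1$. This time the fibres of $\Gamma \mapsto \Gamma'$ are $1$-dimensional: the preimage of $\Gamma'$ in $\F$ is the two-dimensional cone $C_P(\Gamma')$, a Hirzebruch surface $\F_{n-1}$, on which the rational normal curves through the $n+1$ points of $S \setminus \{P\}$ form a $1$-parameter family in the section class $\sigma + nF$. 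Combining with the inductive hypothesis,
\[
\dim(\text{family upstairs}) \leq 1 + \dim(\text{family downstairs}) \leq 1 + \bigl(\#(S \cap \Sing(\F)) - 1\bigr) = \#(S \cap \Sing(\F)).
\]

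The principal technical obstacles are (a) the fibre-dimension computations for $\Gamma \mapsto \Gamma'$ in both cases — specifically the tangent-direction analysis in Case I and the linear-system computation on the Hirzebruch surface in Case II — and (b) the identification $\Sing(\F') = \pi_P(\Sing(\F) \setminus \{P\})$ in Case II, which rests on the explicit cone description of minimal-degree scrolls and on general position of $S$.
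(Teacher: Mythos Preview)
Your overall strategy --- induction via projection from a point of $S$, splitting into the smooth and singular cases --- matches the paper's exactly, and your Case~II treatment is a correct fleshing-out of what the paper states only tersely (``use the same projection argument adopted in the previous Lemma'').

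The gap is in your Case~I fibre argument. You show that the tangent direction of a lift $\Gamma$ at $P$ must land in the finite set $\Gamma' \cap \proj(T_P\F)$, and from this you conclude ``only finitely many lifts exist.'' But bounding the set of possible tangent directions does not by itself bound the lifts: in $\proj^n$ there is a positive-dimensional family of rational normal curves through $P$ projecting onto a fixed $\Gamma'$ with a \emph{prescribed} tangent direction at $P$ (in coordinates, after fixing $\Gamma' = (f_0:\cdots:f_{n-1})$ and the preimage of $P$, one may still vary the extra coordinate polynomial freely). What actually pins the lift down is the containment $\Gamma \subset \F$, and this is precisely what the paper exploits: since $P$ is a smooth point, the projection $\pi_P|_\F \colon \F \dashrightarrow \F'$ is \emph{birational}, so $\Gamma' \subset \F'$ has a unique strict transform in $\F$; that curve is the only possible lift inside $\F$, and it either passes through $P$ or it does not. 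This one-line birationality observation replaces your tangent-direction analysis entirely and closes the gap. Once you insert it, your proof is essentially the paper's.
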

\begin{proof} We argue by induction on $n-d$. If $n-d=1$ then  $\F$ is a quadric of rank 3 or 4, and  we apply the previous Lemma.

Assume now  $n-d > 1.$

If $S \cap \text{Sing} (\F) \neq \emptyset$ we use the same projection argument adopted in the previous Lemma, and we conclude by induction.

Thus we may assume that $S \cap \text{Sing} (\F) = \emptyset$.

Take $P \in S$ and consider the projection $\pi_P: \proj^n \dashrightarrow \proj^{n-1}$.

 By Remark \ref{proj-scroll} the closure of $\pi_P (\F)$ is a rational normal scroll $\F_0$ of dimension $d$ and minimal degree in $\proj^{n-1}$.

Every rational normal curve $\Gamma$ in $\F$ through $S$ projects to  a rational normal curve $\Gamma_0$ in $\F_0$ passing through the image of the remaining $n+1$ points in $\proj^{n-1}$. Moreover  it is easy to check that none of these $n+1$ points is in the singular locus of $\F_0$, thus by induction we may assume that there is at most a finite number of rational normal curves through them.

A rational normal curve $\Gamma_0$ in the scroll $\F_0 \subset\proj^{n-1}$ has precisely one lifting in $\F$ by birationality, thus
it has at most one lifting which passes through $P$.
 Therefore, by induction, we may conclude.\end{proof}

\begin{LEM}\label{k1}
 Let  $\F=\F(a_1,\ldots, a_d)$ be a rational normal scroll,   $\pi: \F(a_1, \ldots, a_d)\to \proj^1$ be the usual projection, 
%
 and let
 $L = \pi^{\ast} (\Oh_{\proj^{1}}(1))$ and $M$  a  hyperplane divisor. 

Let  $S$ be a set of $n+2$ distinct points
 imposing independent conditions on the linear system $|M|$. Then  $\pi_{ |S}: S \to \proj^1$ is injective  if and only if there exists a smooth rational curve $\Gamma$ such that
\begin{enumerate}
\item $S \subset \Gamma \subset \F(a_1, \ldots, a_d)$;
\item $\Gamma \cdot L=1$;
\item $\Gamma\cdot M=n$.
\end{enumerate}
Moreover if such a  $\Gamma$ exists it is unique.

\end{LEM}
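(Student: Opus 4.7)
The equivalence has an easy implication and a direction requiring a linear-algebra count. For the easy ``$\Leftarrow$'' direction, if such a $\Gamma$ exists then $\pi|_\Gamma : \Gamma \to \proj^1$ has degree $\Gamma \cdot L = 1$, hence is an isomorphism of smooth rational curves; restricting to $S \subset \Gamma$ gives the injectivity of $\pi|_S$.

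For the ``$\Rightarrow$'' direction, I parameterize candidates via Remark \ref{coordinate} with $k=1$: any such $\Gamma$ arises from an embedding $\psi:\proj^1 \to \F(\underline{a})$ of the form $(s_0,s_1)\mapsto(y_1,\ldots,y_d)\times(t_0,t_1)$ with $\deg t_i=1$ and $\deg y_i=n-a_i$. After acting on the source by $\Aut(\proj^1)$ I may take $(t_0,t_1)=(s_0,s_1)$, so $\Gamma$ is encoded by a tuple $(y_1,\ldots,y_d)\in \bigoplus_{i=1}^d H^0(\proj^1,\Oh(n-a_i))$, determined up to the overall scaling $\C^*$-action. Setting $Q_j:=\pi(P_j)$, pairwise distinct by hypothesis, and writing $P_j=(\lambda_{1,j},\ldots,\lambda_{d,j})\times Q_j$ in fiber coordinates, the condition $P_j\in\Gamma$ becomes the proportionality $(y_1(Q_j),\ldots,y_d(Q_j))\parallel(\lambda_{1,j},\ldots,\lambda_{d,j})$, equivalent to $d-1$ linear equations on the coefficients of $(y_i)$. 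Since $\sum_i a_i=n-d+1$, the ambient space has dimension $\sum_i(n-a_i+1)=(d-1)n+2d-1$ while the total number of equations is $(n+2)(d-1)=(d-1)n+2d-2$, so the expected affine solution space is one-dimensional, projectively a unique candidate $\Gamma$.

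The main step is to show that these $(n+2)(d-1)$ equations are in fact independent. I would do this by identifying any nontrivial linear dependence with a nontrivial hyperplane section of $\F$ vanishing on $S$. Concretely, such a dependence is a nonzero $\beta=(\beta_{i,j})$ satisfying (i) $\sum_j\beta_{i,j}\,y(Q_j)=0$ for all $y\in H^0(\Oh(n-a_i))$ and each $i$, and (ii) $\sum_i\beta_{i,j}\lambda_{i,j}=0$ for each $j$. From the cohomology sequence of $0\to\Oh(-a_i-2)\to\Oh(n-a_i)\to\Oh(n-a_i)|_{\sum Q_j}\to 0$ together with Serre duality on $\proj^1$, condition (i) places each row $\beta_{i,\cdot}$ in an $(a_i+1)$-dimensional space canonically identified with $H^0(\Oh(a_i))$, yielding $\beta_{i,\cdot}\leftrightarrow z_i$ for a unique $z_i\in H^0(\Oh(a_i))$. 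Under this correspondence, condition (ii) reads, up to nonzero scalars depending only on $j$, $\sum_i z_i(Q_j)\lambda_{i,j}=0$, i.e.\ the vanishing at $P_j$ of the hyperplane section $h:=\sum_i z_i y_i\in H^0(\F,\Oh_\F(M))$. The assumption that $S$ imposes independent conditions on $|M|$ then forces $h=0$, hence $\beta=0$, proving independence and yielding a unique projective solution.

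To conclude, one must verify that the unique solution $(y_1,\ldots,y_d)$ has no common zero, so that the resulting section has $M$-degree exactly $n$ and its image in $\proj^n$ is a rational normal curve of degree $n$ through $S$; otherwise factoring out the common linear factor produces a section of $M$-degree $n-1$ passing through at least $n+1$ of the $P_j$, whose image is a rational curve of degree $n-1$ in $\proj^n$ and hence lies in a hyperplane, contradicting the general-position hypothesis on $S$. The main technical obstacle in the plan is the Serre-duality bookkeeping identifying $\beta_{i,\cdot}$ with $z_i\in H^0(\Oh(a_i))$ and verifying that (ii) translates precisely, up to scalar, to the vanishing of $h$ on $S$; once this correspondence is established the rest is dimension counting.
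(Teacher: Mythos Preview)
Your proof is correct and takes a genuinely different route from the paper's.

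The paper argues by induction on $d$: for $d\ge 2$ it produces a divisor $D\in|L+M|$ through $S$, shows that any reducible such $D$ would split as $D_1+D_2$ with $D_1\in|eL|$, $D_2\in|M-eL|$ and then counts that $D_1$ can contain at most $e$ points of $S$ (injectivity of $\pi|_S$) and $D_2$ at most $n-e$ (independent conditions on $|M|$); hence $D$ is an irreducible $(d-1)$-dimensional scroll and induction applies. Uniqueness is handled separately via $H^0(\F,\sI_{\Gamma_0}(L+M))=H^0(\F,\sI_S(L+M))$.

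Your approach is a direct rank computation: parametrize degree-$1$ sections by $(y_1,\dots,y_d)\in\bigoplus_i H^0(\Oh(n-a_i))$, write the $(n+2)(d-1)$ incidence equations, and prove their independence by identifying a linear dependence, via the residue pairing on $\proj^1$, with a section $h=\sum_i z_i y_i\in H^0(\F,M)$ vanishing on $S$. The hypothesis that $S$ imposes independent conditions on $|M|$ then kills $h$, giving a one-dimensional solution space and hence existence and uniqueness simultaneously. Your final basepoint-freeness check (a common zero would produce a section of $M$-degree $\le n-1$ through at least $n+1$ of the $P_j$, contradicting general position) completes the verification that $\Gamma\cdot M=n$ and that $\Gamma$ is smooth.

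Each approach has its virtues. The paper's induction is more geometric and highlights the role of $|L+M|$; it also explains why the curve $\Gamma$ is literally the base locus of $|\sI_S(L+M)|$, a description not visible in your argument. Your linear-algebra route is more self-contained, yields uniqueness for free, and makes the exact role of the hypothesis on $S$ transparent: it is precisely what forces the relation matrix to have full rank. The residue/Serre-duality identification you flag as the technical crux is indeed the only delicate point; concretely, with $g(t)=\prod_j(t-Q_j)$ one has $\beta_{i,j}=z_i(Q_j)/g'(Q_j)$, and the scalars $g'(Q_j)^{-1}$ are the ``nonzero scalars depending only on $j$'' you mention.
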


\begin{proof} 
Assume there exists a rational normal curve $\Gamma$ satisfying the conditions  {\em1., 2., 3.} Since $\Gamma \cdot L=1$ we have that $\pi$ is an isomorphism on $\Gamma$, thus it must be injective on $S \subset \Gamma$ too.

Assume now that $\pi_{ |S}$ is injective. We are going to prove that the intersection of every divisor in $|L+M|$ containing $S$ is indeed a smooth rational curve $\Gamma$ satisfying {\em1., 2., 3.}

We will prove it by induction on $d= \dim \F$.

If $d=1$ the statement is trivial.

Assume now $d \geq 2$. The system $|L+M|$ is base point free of projective dimension $n+d$, hence 
the restriction map $H^0(\F, L+M) \to H^0(S, \Oh_{S})$ has nontrivial kernel, since $h^0(S, \Oh_{S})=n+2$.
Thus there exists a divisor $D \in |L+M|$ containing $S$. $D$ is a $(d-1)$-dimensional scheme and we may assume $D$   integral. Indeed,
assuming by a contradiction that  $D$ is not integral then   there should  exist a positive integer $e$ so that we can write 
 $D=D_1+D_2$, with  $D_1 \in |eL|$ and $D_2 \in |M-eL|$ (cf. \cite[Chapter 2]{miles}). But $h^0(X, D_1) \leq e+1$ and $h^0(X, D_2) \leq n-e+1$, i.e.  at most $e$ point in $S$ can be contained in the divisor $D_1$ since $\pi$ is injective on $S$
 and  at most $n-e$ points in $S$ can be contained in the divisor $D_2$ since $S$ imposes independent conditions on $|M|$, which is  absurd.  

Therefore by construction $D$ is  a rational normal scroll of dimension $d-1$ containing $S$, whose projection $\pi : D \to \proj^1$ is the restriction of the projection of $\F$.
Notice that $\deg (M^{d-1}_{|D})=D \cdot M^{d-1}=(L+M) \cdot M^{d-1}=n-d+2$. Thus we may apply the induction argument to $D$, endowed with the line bundles $L_{|D}$, $M_{|D}$. By induction we know that the intersection of every divisor in $H^0(D, L_{|D}+M_{|D})$ containing $S$ is a smooth rational
curve $\Gamma$, and the intersection of every divisor in $H^0(\F, L+M)$  containing $S$ must coincide with it. 

To conclude the statement we need to prove that such smooth rational curve, if it exists, is unique. Assume that $\pi_{ |S}$  is injective. If $\Gamma_0$ is any smooth rational curve satisfying the conditions 1, 2, 3 we have that the restriction map $H^0(\Gamma_0, L+M)=H^0(\Gamma_0, \Oh_{\Gamma_0} (1+n)) \to  H^0(S, \Oh_{S})$ is an isomorphism, since $\Gamma_0 \iso \proj^1$. Thus $H^0(\F, \sI_{\Gamma_0} (L+M))= H^0(\F, \sI_{S} (L+M))$, hence $\Gamma_0$ is contained in the intersection of the sections of $H^0(\F, L+M)$ vanishing on $S$. But we have just proved that this intersection is precisely the smooth rational curve $\Gamma$, thus $\Gamma_0 = \Gamma$.
\end{proof}

\begin{PROP}\label{unicity} Assume $n \geq 2d$  and let $\F$ be a rational normal scroll of dimension $d$ in $\proj^n$. Consider a set  $S \subset \F$ of $n+2$ points in linear general   position.  

Then the family of rational normal curves $\Gamma$ such that $S \subset \Gamma \subset \F$ and  the induced projection $\Gamma \to \proj^1$ has degree $k$, with $1 \leq k \leq d$, is either empty, or it has positive dimension, or it consists of one single curve.
\end{PROP}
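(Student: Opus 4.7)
For $k=1$, the statement is essentially immediate from Lemma~\ref{k1}: the induced map $\Gamma \to \proj^1$ is an isomorphism, so the family is nonempty only when $\pi_{|S}$ is injective, in which case the unique $\Gamma$ exists.

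Assume $k \geq 2$. My plan is to parametrize each $\Gamma$ in the family explicitly via Remark~\ref{coordinate}, as a tuple $(y_1,\ldots,y_d,t_0,t_1)$ of polynomials on $\proj^1$ with $\deg y_i = n - a_i k$ and $\deg t_\nu = k$, modulo $\Aut(\proj^1)$ on the source and the torus action on $\F$. The induced degree-$k$ cover $f=(t_0:t_1)$ determines, for each $P_l \in S$ with $q_l = \pi(P_l)$, a unique preimage $\tau_l \in f^{-1}(q_l)$ with $\psi(\tau_l)=P_l$. The crucial observation is that for \emph{fixed} data $(f,\{\tau_l\})$, the system $\psi(\tau_l)=P_l$ becomes linear in the coefficients of the $y_i$'s: it prescribes the projective ratios $y_1(\tau_l):\cdots:y_d(\tau_l)$, and hence imposes $(d-1)(n+2)$ linear conditions. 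Consequently the locus of admissible $(y_1,\ldots,y_d)$ is the projectivization of a linear subspace; it is either empty, a single point, or positive-dimensional. In particular, if two distinct curves $\Gamma_1,\Gamma_2$ in the family share the same $(f,\{\tau_l\})$ after $\Aut(\proj^1)$-normalization, this linear space is automatically positive-dimensional, producing an infinite subfamily.

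The remaining case -- and the \emph{main obstacle} -- is to rule out two distinct curves $\Gamma_1,\Gamma_2$ with genuinely different induced covers $f_1 \neq f_2$ giving rise to a finite family. My approach here would be a pencil/deformation argument: form $f_\lambda = (1-\lambda)f_1 + \lambda f_2$ and track continuously the preimages $\tau_l^\lambda \in f_\lambda^{-1}(q_l)$ starting from the endpoints $\lambda=0,1$, then solve the linear system of the previous paragraph for $y^\lambda$ at each such $\lambda$, producing a one-parameter family $\Gamma_\lambda$ inside the original family. The hypothesis $n\geq 2d$ is expected to enter by ensuring substantial slack in the polynomial degrees $n-a_ik$, allowing solvability of the linear system to propagate off the endpoints by an implicit-function/semicontinuity argument. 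The critical technical point is that the rank of the coefficient matrix of the linear system behaves uniformly along the pencil, so that consistency does not degenerate for $\lambda$ strictly between $0$ and $1$; this semicontinuity issue is the principal difficulty in making the pencil argument rigorous.
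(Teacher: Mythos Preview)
Your treatment of $k=1$ matches the paper's, and your linear-algebra observation for fixed $(f,\{\tau_l\})$ is correct and worth keeping: once the degree-$k$ cover and the marked preimages are fixed, the conditions $\psi(\tau_l)=P_l$ are linear in the coefficients of the $y_i$'s, so the solution locus is (the projectivization of) a linear subspace, and two distinct solutions immediately give a one-parameter family.

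The genuine gap is exactly where you flag it, and it is not merely a technicality. Consistency of a linear system is a \emph{closed} condition on the coefficients, so semicontinuity runs against you: the rank of your coefficient matrix can only jump \emph{up} for generic $\lambda$, which would make the system for $y^\lambda$ \emph{in}consistent in the interior of the pencil even if it is consistent at $\lambda=0,1$. There is no mechanism in your argument forcing the consistent locus to be connected (or even to contain a path from $f_1$ to $f_2$), and the hypothesis $n\geq 2d$ does not obviously supply one. Additional hazards with the pencil $f_\lambda=(1-\lambda)f_1+\lambda f_2$ (degree drop, acquired base points, monodromy of the $\tau_l^\lambda$) are secondary but real. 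The paper sidesteps all of this by a global connectedness argument: in the base case $n=2d$ it considers the incidence correspondence $I_k=\{(\Gamma,P_1,\ldots,P_{n+2}):P_i\in\Gamma\}\subset \sG^{\vee}(\F,k)\times \F(\underline{a})^{n+2}$ and the projection $I_k\to\F(\underline{a})^{n+2}$, exhibits one fibre consisting of a single point (via the dimension count of Proposition~\ref{curve_in_scroll}), and then invokes Stein factorisation to conclude that \emph{every} fibre is connected, so zero-dimensional fibres are single points. The case $n>2d$ is then handled by projecting from a point of $S$ and inducting on $n-2d$. This replaces your local interpolation with a single topological statement about the whole family, and is where the argument actually closes.
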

\begin{proof}
Let 
$\sG(S)$ be the family of rational normal curves  containing $S$ and 
let 
$\sG^{\vee}(\F,k)$ be 
 the family of rational normal curves $\Gamma$ in $\F$ such that the
induced map $\Gamma \to \proj^1$ has degree $k$. We are going to study the family $\sG^{\vee}(\F,k) \cap  \sG(S).$

If $k=1$ we apply Lemma \ref{k1}. Consider the map $\F(a_1, \ldots, a_d) \to\F$. If there exists a rational normal curve $\Gamma$ with the required properties, then $S$ and $\Gamma$ can be lifted to a set of $n+2$ points and a smooth rational curve in $\F(a_1, \ldots, a_d)$ as described in Lemma \ref{k1}. In particular, it is clear that the family of suitable $\Gamma$'s coincides with the family of liftings of $S$ in $\F(a_1, \ldots, a_d)$. If $S \cap \text{Sing}(\F) = \emptyset$ the lifting is unique and there exists at most one rational normal curve. Otherwise there is a positive dimensional family of liftings.

\hfill\break
Assume now $k \geq 2$. We work by induction on $n - 2d$. Assume at first that $n-2d=0$.

Consider the incidence correspondence
 $$I_k=\{(\Gamma, P_1, \ldots, P_{n+2}): P_i \in \Gamma\} \subset \sG^{\vee}(\F,k) \times \F(a_1, \ldots, a_d)^{n+2}.$$

Proposition \ref{curve_in_scroll} implies that $\dim I_k=(d-1)(n+3-k)+(n+2)$ since the projection $I_k \to  \sG^{\vee}(\F,k) $ is surjective with ($n+2$)-dimensional fibres.

Consider now the map $I_k \to \F(a_1, \ldots, a_d)^{n+2}.$  It is generically injective,  i.e.,
there exists $S \in \F(a_1, \ldots, a_d)^{n+2}$ having precisely one preimage in $I_k$. Indeed, by the polynomial description of the curves we made in Proposition \ref{curve_in_scroll},  and a dimensional computation,  a point in $\F$ imposes $d-1$ independent conditions on  the family of the curves with degree $k$ map to $\proj^1$. 
Similarly,
$n+3-k$ general   points in $\F$ impose precisely $(d-1)(n+3-k)$ conditions on such family, thus  by Proposition \ref{curve_in_scroll} there exists a finite number of suitable rational normal curves through those points.
By fixing one of them and taking  other $k-1$ points on it we obtain  the length $n+2$ scheme $S$ we were looking for.

The map $I_k \to \F(a_1, \ldots, a_d)^{n+2}$, is a quasi-projective morphism  since $I_k \subset \sG \times \F(a_1, \ldots, a_d)^{n+2}$ and $\sG$ is quasi-projective. 

Since there is a point in $\F(a_1, \ldots, a_d)^{n+2}$ with precisely one preimage, by  a Stein factorisation  argument (see \cite[Corollary III.11.5]{Ha}) we see that every fibre is connected. In particular every set of $n+2$ points with zero dimensional fibre (in particular those points avoiding the singular locus of $\F$ by Proposition \ref{number_rnc}) must have precisely one preimage,  proving our  statement.

\hfill\break
Assume now that $n > 2d$.  Fix $S \subset \F$ and assume that there is a finite number of rational normal curve $\Gamma$ such that
 $S \subset \Gamma \subset \F$ and  the induced projection $\Gamma \to \proj^1$ has degree $k$.  With the usual  projection argument used in
 Proposition  \ref{number_rnc} we may assume that $S \cap \Sing(\F)= \emptyset$.

Consider a point $P\in S$ and  the projection $\pi_P$ onto $\proj^{n-1}$.

By  Proposition  \ref{number_rnc}  and induction there is exactly one rational normal curve $\Gamma_0$ in  $\proj^{n-1}$  such that
$ \pi(S\setminus \{P\}) \subset \Gamma_0 \subset \pi_P(\F)$.

But  a rational normal curve in the scroll $\pi_P(\F) \subset\proj^{n-1}$ has one lifting in $\F$ by birationality, thus
it has at most one lifting which passes through $P$, and
we may conclude by induction.
\end{proof}

\hfill\break
Finally we are going to consider families of scrolls which contains a rational normal curve $\Gamma$  passing through $n+2$ fixed points, such that  the induced map $\Gamma \to \proj^1$ has degree $k$.

\begin{PROP}\label{Fk}  Assume that $n$ is an even number and let $d= \frac n2$. Let  $\underline{a}=\{a_1, \ldots, a_d\}$  be such that $\sum a_i=n-d+1$. 

Consider a set $S$  of $n+2$ points in linear general   position in $\proj^n$ and  let $\sF(S,k)_{\underline{a}}$   be the family 
 consisting of  $\F$  isomorphic  to  $\F(\underline{a})$ 
such that   there exists a rational normal curve $\Gamma$  so that $S \subset \Gamma \subset \F$ and the induced map $\Gamma \to \proj^1$ has degree $k$. Then
\begin{itemize}
\item
if $n-k a_i \geq 0$ for every $i$ we have
\begin{eqnarray*}\dim \sF(k)_{\underline{a}}& =&\frac{n^2}{4}+n-2-(k-1)(\frac n2-1)-(\dim \Aut(\underline{a})-\frac{n^2}{4}) \\ & =& \dim \sF(S)_{\underline{a}} - (k-1)(\frac n2-1)\end{eqnarray*}

\item
otherwise $\sF(k)_{\underline{a}}$ is empty.
\end{itemize}
\end{PROP}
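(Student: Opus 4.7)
The plan is to emulate the proof of Theorem \ref{dimensionvarie}(f), realising $\sF(S,k)_{\underline{a}}$ as a quotient by $\Aut(\F(\underline{a}))$ of an incidence scheme inside $\F(\underline{a})^{n+2}$ whose second coordinate records the rational normal curve $\Gamma$, and computing dimensions by projecting onto $\sG^{\vee}(\F(\underline{a}),k)$, for which Proposition \ref{curve_in_scroll} already supplies the answer. The emptiness case is immediate: if $n-ka_i<0$ for some $i$, then $\sG^{\vee}(\F_0,k)=\emptyset$ by Proposition \ref{curve_in_scroll}, where $\F_0=\F(\underline{a})$; since any $\F\in\sF_{\underline{a}}$ is birationally $\F_0$ and a rational normal curve with the prescribed projection degree would pull back to one on $\F_0$, no such scroll exists and $\sF(S,k)_{\underline{a}}=\emptyset$.

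Assume now $n-ka_i\geq 0$ for every $i$ and consider the incidence
$$U'=\bigl\{\bigl((\sigma_1,\ldots,\sigma_{n+2}),\Gamma\bigr)\in\F_0^{n+2}\times\sG^{\vee}(\F_0,k) : \sigma_i\in\Gamma\ \forall i,\ (\sigma_i)\text{ impose independent conditions on }|M|\bigr\}.$$
Exactly as in the last part of the proof of Theorem \ref{dimensionvarie}(f), such a tuple determines a unique embedding $f:\F_0\hookrightarrow\proj^n$ via $|M|$ sending $\sigma_i$ to $S_i$, and hence a scroll $f(\F_0)\in\sF(S,k)_{\underline{a}}$ together with the curve $f(\Gamma)$; two tuples yield the same pair $(\F,f(\Gamma))$ iff they differ by an element of $\Aut(\F_0)$. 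This gives a surjective forgetful map $U'/\Aut(\F_0)\to\sF(S,k)_{\underline{a}}$ whose fibre over a generic $\F$ is, by Proposition \ref{unicity} in the boundary case $n=2d$ (with $S\cap\Sing(\F)=\emptyset$), a single point. Therefore $\dim\sF(S,k)_{\underline{a}}=\dim U'-\dim\Aut(\F_0)$.

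Projecting $U'$ onto the second factor, the fibre over $\Gamma$ is the open subset of $\Gamma^{n+2}\cong(\proj^1)^{n+2}$ on which the $\sigma_i$ are distinct and impose independent conditions on $|M|$, hence of dimension $n+2$. Combined with Proposition \ref{curve_in_scroll} and the hypothesis $n=2d$, this gives $\dim U'=(d-1)(n+3-k)+(n+2)$. The exact sequence used in the proof of Theorem \ref{dimensionvarie} produces $\dim\Aut(\F_0)=\dim\Aut(\underline{a})+2$, and one also uses that the action of $\Aut(\F_0)\hookrightarrow\operatorname{PGL}(n+1,\C)$ on $(n+2)$-tuples in linear general position is generically free. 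Thus
$$\dim\sF(S,k)_{\underline{a}}=(d-1)(n+3-k)+(n+2)-\dim\Aut(\underline{a})-2,$$
and a routine expansion with $d=n/2$ yields both forms of the claimed equality; in particular the relation $\dim\sF(S,k)_{\underline{a}}=\dim\sF(S)_{\underline{a}}-(k-1)(n/2-1)$ is obtained by direct comparison with Theorem \ref{dimensionvarie}(f).

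The main obstacle is the generic-injectivity step for the forgetful map $U'/\Aut(\F_0)\to\sF(S,k)_{\underline{a}}$, without which the dimension computation would only give an upper bound. This is exactly the content of Proposition \ref{unicity} in the borderline regime $n=2d$: for a scroll $\F$ whose intersection with $S$ avoids $\Sing(\F)$, the Stein-factorisation argument of Proposition \ref{unicity} forces the family of admissible curves through $S$ to be connected and $0$-dimensional, hence a single curve, which tightens the dimension count to an equality.
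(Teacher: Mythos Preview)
Your setup via the incidence $U'$ and the quotient by $\Aut(\F_0)$ is exactly the paper's (there it is called $I_k$), and the upper bound $\dim\sF(S,k)_{\underline{a}}\le\dim I_k-\dim\Aut(\F_0)$ comes out the same way. The one substantive difference is how the upper bound is turned into an equality. The paper does not cite Proposition~\ref{unicity}; instead it reruns the linear-algebra argument from its proof: a point of $\F_0$ imposes at most $d-1$ conditions on $\sG^{\vee}(\F_0,k)$, so through $n+3-k$ general points there is at least one such curve, and placing the remaining $k-1$ points on it exhibits a subset of the image $\varphi(I_k)\subset\F_0^{\,n+2}$ of dimension $d(n+3-k)+(k-1)=\dim I_k$, giving the lower bound directly. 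You instead deduce generic injectivity of $U'/\Aut(\F_0)\to\sF(S,k)_{\underline{a}}$ from the \emph{statement} of Proposition~\ref{unicity} together with Proposition~\ref{number_rnc}. Both routes are valid; yours is tidier in that it cites results already proved, while the paper's is more self-contained at this spot.

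One point you assert but should justify: that $S\cap\Sing(\F)=\emptyset$ for generic $\F\in\sF(S,k)_{\underline{a}}$, which is what lets Proposition~\ref{number_rnc} force the fibre to be $0$-dimensional and hence, by Proposition~\ref{unicity}, a single curve. This holds because $U'$ is irreducible (the base $\sG^{\vee}(\F_0,k)$ is irreducible by Proposition~\ref{curve_in_scroll}, and the fibres are open in $\Gamma^{n+2}$), so it suffices to observe that for each $\Gamma$ the lift $\psi(\Gamma)\subset\F_0$ of Remark~\ref{coordinate} maps isomorphically onto the nondegenerate curve $\Gamma\subset\proj^n$; hence $\psi(\Gamma)$ meets the locus contracted by $\varphi_{|M|}$ in only finitely many points, and generic $\sigma_i\in\psi(\Gamma)$ avoid it. Adding a sentence to this effect closes the argument.
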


\begin{proof} If $n-ka_i < 0$ for some $i$ Proposition \ref{curve_in_scroll} tells us that there are no rational normal curves with the required degree on $\proj^1$, thus $\sF(k)_{\underline{a}}$ is empty. We may therefore assume that this is not the case.

In Thm. \ref{dimensionvarie}  we have seen that $\sF(S)_{\underline{a}}$ is isomorphic to an open subset of $\F(a_1, \ldots, a_d)^{n+2}/\Aut(\F(a_1, \ldots, a_d))$. According to this isomorphism the family we are looking for, $\sF(k)_{\underline{a}}$, coincides with the set of $n+2$ points in $\F(a_1, \ldots, a_d)$ contained in some rational normal curve $\Gamma$, such that $\deg(\Gamma \to \proj^1)=k$ (up to $\Aut(\F(a_1, \ldots, a_d)))$. We have seen how these curve are characterised in Proposition \ref{curve_in_scroll}.

Consider the incidence correspondence
$$I_k=\{(\Gamma, P_1, \ldots, P_{n+2}): P_i \in \Gamma\} \subset \sG^{\vee}(\F(\underline{a}),k)  \times \F(\underline{a})^{n+2}.$$
Thanks to Proposition \ref{curve_in_scroll} we know that $\dim I_k= (d-1)(n+3-k)+n+2$. Consider the projection $\varphi: I_k \to \F(a_1, \ldots, a_d)^{n+2}$. Since the space $\sF(k)_{\underline{a}}$ is $\varphi (I_k) / \Aut(\F(a_1, \ldots, a_d)) $, its dimension is at most $\dim I_k - \dim \Aut(\F(a_1, \ldots, a_d))$:

$$\dim \sF(k)_{\underline{a}} \leq \frac{n^2}{4}+n-2-(k-1)(\frac n2-1)-(\dim \Aut(\underline{a})-\frac{n^2}{4}).$$

To prove the opposite inequality we argue as in the proof of Lemma \ref{unicity}.
Consider the characterisation of rational normal curves  as a set of polynomials of prescribed degree, up to automorphism, given in Proposition \ref{curve_in_scroll}. A simple linear algebra computation shows that  passing through a fixed point imposes at most $d-1=\frac n2 -1$ conditions on those polynomials, thus for $n+3-k$ general   points in $\F(a_1, \ldots, a_d)$ there exists at least one rational normal curve with the right degree. Fixing $n+3-k$ general   points, we can moreover choose the remaining $k-1$ points on this curve. Thus we  find a space of dimension $d\cdot (n+3-k)+(k-1)$ in $\F(a_1, \ldots, a_d)^{n+2}$ which is in the image of the incidence correspondence $I \to \F(a_1, \ldots, a_d)^{n+2}$. Therefore  $\sF(k)_{\underline{a}}$ has dimension at least $d\cdot (n+3-k)+(k-1) - \dim \Aut(\F(a_1, \ldots, a_d))$, or equivalently
$$\dim \sF(k)_{\underline{a}} \geq \frac{n^2}{4}+n-2-(k-1)(\frac n2-1)-(\dim \Aut(\underline{a})-\frac{n^2}{4}).$$
\end{proof}

\begin{PROP}\label{2n-3} Let $S$ be a set of $n+2$ points in linear general position in $\proj^n$ and let  $\sF(S,k)_{\underline{a}}$ and $\sF(S,h)_{\underline{a}}$ be as in Proposition \ref{Fk}. Assume that $k \neq h$ and $k+h \geq \frac n2 +1$. Then
$$\dim (\sF(S,k)_{\underline{a}} \cap \sF(S,h)_{\underline{a}}) \leq 2n-3.$$
\end{PROP}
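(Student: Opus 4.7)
The plan is to analyse a double incidence correspondence
$$J=\{(\F,\Gamma_k,\Gamma_h)\ :\ \F\in\sF_{\underline{a}},\ S\subset\Gamma_k\cap\Gamma_h,\ \Gamma_k,\Gamma_h\subset\F,\ \deg(\pi|_{\Gamma_k})=k,\ \deg(\pi|_{\Gamma_h})=h\}$$
and to project it in two different ways, eventually comparing with the formula of Proposition \ref{Fk}.

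First, the natural projection $J\to\sF(S,k)_{\underline{a}}\cap\sF(S,h)_{\underline{a}}$ is surjective. Since for a generic scroll $\F$ in the target one has $S\cap\Sing(\F)=\emptyset$, Proposition \ref{unicity} applied separately for degrees $k$ and $h$ forces both $\Gamma_k$ and $\Gamma_h$ to be unique, so this projection has generically finite fibers and $\dim J$ coincides with the dimension we wish to bound. Next, consider the projection $\gamma_k\colon J\to\sG(S)$, $(\F,\Gamma_k,\Gamma_h)\mapsto\Gamma_k$. Over a generic $\Gamma_k$ the fiber is in bijection (through $\F$, again by the uniqueness of $\Gamma_h$) with $\sF(\Gamma_k,k)_{\underline{a}}\cap\sF(S,h)_{\underline{a}}$, so that
$$\dim J\leq (n-1)+\dim\bigl(\sF(\Gamma_k,k)_{\underline{a}}\cap\sF(S,h)_{\underline{a}}\bigr).$$
Applying the same double-projection argument to the simpler incidence $\{(\F,\Gamma_k)\}$, one derives the auxiliary equality $\dim\sF(\Gamma_k,k)_{\underline{a}}=\dim\sF(S,k)_{\underline{a}}-(n-1)$.

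The heart of the proof, and the main obstacle, is the codimension estimate
$$\dim\bigl(\sF(\Gamma_k,k)_{\underline{a}}\cap\sF(S,h)_{\underline{a}}\bigr)\leq \dim\sF(\Gamma_k,k)_{\underline{a}}-(h-1)(d-1),$$
which asserts that the codimension $(h-1)(d-1)$ of $\sF(S,h)_{\underline{a}}$ inside the ambient $\sF(S)_{\underline{a}}$ (Proposition \ref{Fk}) is preserved when restricted to the subfamily $\sF(\Gamma_k,k)_{\underline{a}}$. To establish it I would reproduce, within $\sF(\Gamma_k,k)_{\underline{a}}$, the incidence argument of Proposition \ref{Fk}: set up the auxiliary incidence $\tilde I=\{(\F,\Gamma_h)\ :\ \F\in\sF(\Gamma_k,k)_{\underline{a}},\ S\subset\Gamma_h\subset\F,\ \deg(\pi|_{\Gamma_h})=h\}$ and use, as in Remark \ref{coordinate} and Proposition \ref{curve_in_scroll}, that each point of $\F(\underline{a})$ imposes $d-1$ independent conditions on the linear family of rational normal curves with projection degree $h$.

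Granted the estimate, combining the displayed inequalities with the formula of Proposition \ref{Fk} gives
\begin{align*}
\dim J &\leq \dim\sF(S,k)_{\underline{a}}-(h-1)(d-1)\\
&=\tfrac{n^2}{4}+n-2-(k+h-2)\bigl(\tfrac{n}{2}-1\bigr)-\bigl(\dim\Aut(\underline{a})-\tfrac{n^2}{4}\bigr).
\end{align*}
The hypothesis $k+h\geq \tfrac n2+1$ (equivalently $k+h-2\geq \tfrac n2-1$) together with inequality \eqref{d^2} then yields
$$\dim J\leq \tfrac{n^2}{4}+n-2-\bigl(\tfrac n2-1\bigr)^2=2n-3,$$
with equality tight precisely in the balanced case, where $\dim\Aut(\underline{a})=d^2$.
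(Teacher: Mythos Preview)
Your route is structurally different from the paper's, but the extra machinery does not bypass the one hard point, and your proposed justification of that point does not work as stated.

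The paper's argument is direct: via Theorem~\ref{dimensionvarie}(f) it identifies $\sF(S)_{\underline{a}}$ with an open set of $\F(\underline{a})^{n+2}/\Aut(\F(\underline{a}))$, observes that $\sF(S,k)_{\underline{a}}$ and $\sF(S,h)_{\underline{a}}$ are the images of the incidences $I_k$, $I_h$ of Proposition~\ref{Fk}, and asserts that in suitable local affine coordinates these are \emph{transverse} affine subspaces. Transversality plus the dimension formula of Proposition~\ref{Fk} gives
\[
\dim\bigl(\sF(S,k)_{\underline{a}}\cap\sF(S,h)_{\underline{a}}\bigr)=\dim\sF(S)_{\underline{a}}-(k+h-2)(\tfrac n2-1)-\bigl(\dim\Aut(\underline{a})-\tfrac{n^2}{4}\bigr),
\]
and the hypothesis $k+h\ge \tfrac n2+1$ together with \eqref{d^2} yields $2n-3$.

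Your detour through the fixed curve $\Gamma_k$ and the family $\sF(\Gamma_k,k)_{\underline{a}}$ arrives at exactly the same crux, relabelled: your step~5 codimension estimate is the transversality claim restricted to a fibre of the map $\sF(S,k)_{\underline{a}}\dashrightarrow\sG(S)$. This is not easier than the original statement; via your own fibration argument the two are equivalent for generic $\Gamma_k$. More concretely, the method you propose for step~5 (``reproduce the incidence argument of Proposition~\ref{Fk}'') does not deliver the codimension $(h-1)(d-1)$. If you set up $\tilde I=\{(\F,\Gamma_h)\}$ over $\sF(\Gamma_k,k)_{\underline{a}}$ and project to the base, the fibre over $\F$ is $\sG^\vee(\F,h)\cap\sG(S)$, which by Proposition~\ref{number_rnc} has dimension $\le\#(S\cap\Sing\F)$, generically $0$; this only gives $\dim(\text{target})\le\dim\sF(\Gamma_k,k)_{\underline{a}}$, with no $(h-1)(d-1)$ in sight. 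The drop in dimension does not come from the fibre of $\tilde I$ but from the fact that the linear conditions cutting out $I_h$ (in the polynomial parameters of Remark~\ref{coordinate}) are independent of those cutting out $I_k$ when $k\ne h$ --- which is precisely the local transversality the paper invokes and which your incidence reformulation does not supply.

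Two smaller points. In step~2 you assume a generic $\F$ in the intersection satisfies $S\cap\Sing(\F)=\emptyset$; this is not a priori clear on $\sF(S,k)_{\underline{a}}\cap\sF(S,h)_{\underline{a}}$. It does not matter for the upper bound, since surjectivity alone gives $\dim(\text{target})\le\dim J$, but your claims ``$\dim J$ coincides with'' and ``in bijection'' should be weakened to the inequalities you actually need. Similarly in step~3. Your final arithmetic, granted step~5, is correct and matches the paper's.
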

\begin{proof} By Proposition  \ref{Fk} $\sF(S)_{\underline{a}}$ is  isomorphic to an open subset  
$$ U \subseteq \F(a_1, \ldots, a_d)^{n+2}/\Aut(\F(a_1, \ldots, a_d))$$
and  $\sF(k)_{\underline{a}} $  (respectively $\sF(h)_{\underline{a}} $)   is the image of the incidence correspondence $I_k$ (resp. $I_h$)  modulo $\Aut(\F(a_1, \ldots, a_d))$.

But we can choose an affine covering so that in local coordinates,  both $I_k$ and $I_h$,  are two affine and transverse  subspaces (modulo actions of $\C^{\ast}$).

In particular, since $\dim \sF(k)_{\underline{a}}  + \dim \sF(h)_{\underline{a}} \geq \sF(S)_{\underline{a}}$ by Proposition  \ref{Fk}, we may conclude that
\begin{eqnarray*}
\dim (\sF(k)_{\underline{a}} \cap \sF(h)_{\underline{a}}) &=& \dim \sF(k)_{\underline{a}}  + \dim \sF(h)_{\underline{a}} - \sF(S)_{\underline{a}}\\
& = & 2n-3 - (\dim \Aut(\underline{a})-\frac{n^2}{4})
\end{eqnarray*}
But it is shown in Equation (\ref{d^2}) of Theorem  \ref{dimensionvarie}  that $\dim \Aut(\underline{a})-\frac{n^2}{4}\geq 0$, hence we obtain our inequality
\end{proof}

\section{General   binary curves and scrolls}


A binary curve is a
 stable curve $C =\Gamma_1 \cup \Gamma_2$   where $\Gamma_i\iso \proj^1$ for $i=1,2$,  and the two curves intersect
transversally in $ n+2 $
distinct points. Note that  the arithmetic genus   of $C$ is $p_a(C)=1-\chi(\Oh_C) =n +1$.

As usual by  $K_C$ we denote a canonical divisor.

We denote by  $Bin_{p_a(C)}\subset \overline{\mathcal{M}}_{p_a(C)}$    the locus of binary curves in the moduli space fo genus $p_a(C)$ stable curves.
By a general   binary curve we mean a  curve $C$ such that $[C]$ is general   in $Bin_{p_a(C)}$.

As in the smooth case a first way to stratify binary curves is via the notion of gonality:

\begin{DEF} The \emph{gonality} of $C$, $\operatorname{gon}(C)$, is the smallest number $k$ such that there is a finite morphism $C \to \proj^1$ of degree $k$. A curve $C$ is said to be  \emph{ hyperelliptic} if $\operatorname{gon}(C)=2$.

   \end{DEF}

First note that a  general   binary curve of genus $p_a(C) \geq 3$ is not hyperelliptic. Indeed if a binary curve is hyperelliptic then there is an isomorphism between the two components fixing the $p_a(C)+1$ points, which exists generically if and only if $p_a(C) \leq 2$.  
 
 Conversely,  
 if $C$ is not hyperelliptic
then
	   $K_{C}$ is very ample on $C$
	by  \cite[Theorem 3.6]{CFHR} and
 $\varphi_{|K_{C}|}$ embeds $C$ in $\proj^{n}$ as the union of two
rational normal curves  intersecting in
 $ n+2$ distinct points in linear general   position. Moreover by \cite{FrTe2}
$C$  satisfies Noether's Theorem, which means that the restriction maps
$$H^0(\proj^{n}, \Oh_{\proj^{n}} (k)) \to H^0(C, \omega_C^{\otimes k})$$
are surjective for every $k \geq 0$.
 
 Therefore  we  can 
identify  $C= \Gamma_1\cup\Gamma_2$ and the  points $\Gamma_1\cap \Gamma_2=\{P_0,\cdots , P_{n+1}\}$  with their  canonical embeddings in $\proj^{n}$.    This way,  
  $C$ is a general   binary curve if and only if
$\Gamma_1$ and $\Gamma_2$ are general   rational normal curves passing through  $\{P_0,\cdots , P_{n+1}\}$.

We have the following useful remark.

\begin{REM}\label{proj-curve} Let $C = \Gamma_1\cup \Gamma_2$ be a general   binary curve canonically embedded in $\proj^n$
and let $P_0\in \Gamma_1 \cap\Gamma_2$.

Consider the projection  from $P_0$ onto a general hyperplane and write it as 
$$\pi_{P_{0}} : \proj^n  \dashrightarrow \proj^{n-1}.$$
Then  $C' =\overline{ \pi_{P_{0}}(C)}$  is a general   binary curve of arithmetic genus $p_a(C') =p_a(C)-1$ canonically embedded in $\proj^{n-1}.$

\end{REM}

Now  let us point out that  it is easy to prove that the usual bound on the gonality of smooth curves holds for binary curves as well.

\begin{LEM}\label{rem gonality}  Let $C=\Gamma_1 \cup \Gamma_2 $ be a binary curve of arithmetic genus $p_a  \geq 3$.
 Then $$\operatorname{gon}(C) \leq \lfloor\frac{p_a(C)+3}{2} \rfloor
$$\end{LEM}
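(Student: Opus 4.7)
Let $p_a=p_a(C)=n+1$ and set $k:=\lfloor(p_a+3)/2\rfloor=\lfloor(n+4)/2\rfloor$.  I aim to construct a finite morphism $\phi\colon C\to\proj^1$ of degree at most $k$, giving $\gon(C)\le k$.  By the normalization $\widetilde{C}=\Gamma_1\sqcup\Gamma_2\to C$, such a morphism is equivalent to a pair of non-constant morphisms $f_i\colon\Gamma_i\to\proj^1$ of degrees $a_1,a_2\ge 1$ with $a_1+a_2\le k$, subject to the compatibility $f_1(P_j)=f_2(P_j)$ at every node $P_0,\ldots,P_{n+1}$; in that case $\deg\phi=a_1+a_2$.

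Fix the splitting $a_1=1$ and $a_2=k-1$.  The space $\sM_{a_i}$ of degree-$a_i$ morphisms $\Gamma_i\to\proj^1$ is irreducible of dimension $2a_i+1$.  Writing $f_i=(F_0^{(i)}:F_1^{(i)})$ with $F_\ell^{(i)}\in H^0(\Oh_{\Gamma_i}(a_i))$, each matching condition at a node $P_j$ becomes the vanishing of the $2\times 2$ determinant
\[
F_0^{(1)}(P_j)F_1^{(2)}(P_j)-F_1^{(1)}(P_j)F_0^{(2)}(P_j)=0,
\]
hence defines a divisor in $\sM_{a_1}\times\sM_{a_2}$.  Therefore the locus $\sS$ of pairs meeting all $n+2$ matching conditions satisfies
\[
\dim\sS\ \ge\ (2a_1+1)+(2a_2+1)-(n+2)\ =\ 2k-n\ \ge\ 3,
\]
since $k\ge(n+3)/2$.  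To verify that $\sS$ is non-empty, I would restrict to the sub-family with $f_1(P_0)=f_2(P_0)=\infty$ (permissible by the $\operatorname{PGL}_2$-action on the target), which forces $F_1^{(1)}(P_0)=F_1^{(2)}(P_0)=0$ and constrains the denominators to explicit linear subspaces.  A direct count shows the resulting parameter space for $(f_1,f_2)$ has dimension $2+(2k-2)=2k$, and the remaining $n+1$ matching conditions at $P_1,\ldots,P_{n+1}$ are bi-linear in $F_0^{(1)}, F_0^{(2)}$ and the free coefficients of $F_1^{(2)}$.  A Vandermonde-type calculation shows these conditions are generically independent, so the solution family has dimension at least $2k-n-1\ge 2$, and a generic member consists of two honest non-constant morphisms which descend to a finite $\phi\colon C\to\proj^1$ of degree $a_1+a_2=k$.

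The main obstacle is precisely this non-emptiness check.  The parameter count alone bounds $\dim\sS$ only from below, and a priori the $n+2$ divisors defining $\sS$ could be arranged unfavourably so that their intersection is empty.  The $\infty$-fibre construction above circumvents this by realizing the matching as an explicit bi-linear system whose solvability follows directly from the linear general position of the $n+2$ nodes, so the full parameter count is achieved.
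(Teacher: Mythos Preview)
Your approach is essentially the paper's: split the degree as $1+(k-1)$ over the two components and match values at the nodes. The paper, however, exploits the choice $a_1=1$ more fully. Since a degree-$1$ map is an isomorphism, the $\operatorname{PGL}_2$-action on the target lets you take $f_1$ to be a \emph{fixed} identification $\Gamma_1\cong\proj^1$ (equivalently, the paper takes the identity on one component and a map $\psi\colon\Gamma_1\to\Gamma_2\cong\proj^1$ of degree $k-1$ on the other), rather than merely normalising a single value $f_1(P_0)=\infty$ as you do. Once $f_1$ is fixed, each matching condition $f_2(P_j)=f_1(P_j)$ is a single \emph{linear} equation in the $2k$ coefficients of $(F_0^{(2)},F_1^{(2)})$, so the solution space is a linear subspace of dimension at least $2k-(n+2)\ge 1$. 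Non-emptiness is then immediate from linear algebra, and since the prescribed values $f_1(P_0),\ldots,f_1(P_{n+1})$ are pairwise distinct, no nonzero solution can degenerate to a constant map (and any common factor of $F_0^{(2)},F_1^{(2)}$ only lowers the degree, which is harmless for the bound). This dissolves the bilinearity and the ``main obstacle'' you flag; the Vandermonde step is not needed.
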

\begin{proof}
Let  $\Gamma_1\cap \Gamma_2=\{P_0,\cdots P_{n+1}\}$, where $ n= p_a(C) - 1$.
 The Lemma follows if we show that there exists a morphism $\psi: \Gamma_1 \to \Gamma_2$ of degree $ \lfloor \frac n2 +1 \rfloor$
which is the identity restricted to $\Gamma_1 \cap \Gamma_2$. Indeed,    the morphism
 $\varphi: C \to \proj^1$ defined as $\psi$ on $\Gamma_1$ and the identity on $\Gamma_2$  is well defined and has the required degree.

To show the existence of such $\psi$
let us consider the normalisation $\pi: \hat{C} \iso \Gamma_1 \sqcup \Gamma_2  \to C$  and let  $\{R_i, S_i\} = \pi^{\ast} (P_i)$ where $R_i \in \Gamma_1$ and
$S_i \in \Gamma_2$.

Assume at first that $n$ is even.

 Taking coordinates on $\Gamma_2 \iso \proj^1$
 such a map should be given by a pair of   polynomials of  degree $\frac n2+1$,
$\psi=(q_1, q_2)$ and the conditions  of being the identity on the intersection correspond to ask $\psi(R_i)=S_i$ for  $i=1, \ldots, n+2$.
Now, the vector space of couples $(q_1, q_2)$ with the required degree has dimension $2\cdot (\frac n2 +2)$, while every pair  $(R_i, S_i)$ imposes one single condition on such vector space. In particular there is a 2-dimensional  vector space of $(q_1, q_2)$ with the required properties, i.e., there exists a one dimensional family of  such maps $\psi$.

Similarly, if $n$ is odd we may find a map $\psi: \Gamma_1 \to \Gamma_2$ of degree $\lfloor  \frac n2 +1 \rfloor$ which is the identity restricted to $\Gamma_1 \cap \Gamma_2$.
\end{proof}

\begin{REM}\label{cliff-max}  Let $C=\Gamma_1 \cup \Gamma_2 $ be a binary curve of arithmetic genus $p_a  \geq 3$. 
We can define its Clifford index (as in the smooth case)  as follows
$$\Cliff(C)=\min_{L\in \Pic(C)}\{ \deg(L)- 2 h^0(C,L) +2 \  : \  h^0(C,L)\geq 2, h^1(C,L)\geq 2\}  $$
(see \cite{FrTe1} for a generalisation to singular curves).

In the smooth case it is a classical result that the gonality is maximal if and only if the Clifford index is maximal, i.e.
$$\gon(C)= \lfloor\frac{p_a(C)+3}{2} \rfloor \Longleftrightarrow 
\Cliff(C)= \lfloor\frac{p_a(C)-1}{2} \rfloor$$
We point out that the same holds for binary curves.

Indeed, it is straightforward to see that $\Cliff(C)\leq \gon(C)-2$. Conversely we may apply verbatim  \cite[Theorem 2.3]{cop-mar}
and see that if the Clifford index is not maximal  then there is a one-dimensional family of maps $C\to \proj^1$ of degree 
$ \leq \lfloor  \frac{p_a(C)+3}{2}\rfloor $;  the same  arguments used in   Lemma \ref{rem gonality}   show that there is at least one map
$C\to \proj^1$ of degree $\leq \lfloor  \frac{p_a(C)+1}{2} \rfloor $. 

\end{REM}

To conclude we show that for a general binary curves  there is no scroll of small  dimension containing it. 

\begin{TEO}\label{no_small_scroll} Let $C= \Gamma_1 \cup \Gamma_2$ be  a \textit{general} binary curve in $\proj^n$.

Then there is no rational normal scroll  of minimal degree and of dimension $\leq  \frac n2$ containing $C$.

\end{TEO}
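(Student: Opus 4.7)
The plan is to combine the gonality of a general binary curve with the intersection bound of Proposition \ref{2n-3}, after reducing to the critical case $d = n/2$. For the reduction, if $d < n/2$ I pick $P \in S$ smooth on $\mathcal{F}$ (which exists because $\Sing(\mathcal{F})$ is a proper subvariety of $\proj^n$, so it contains at most $\dim\Sing(\mathcal{F}) + 1 < n+2$ of the linearly general points of $S$); by Remarks \ref{proj-scroll} and \ref{proj-curve}, $\pi_P$ sends $C$ to a general binary curve of arithmetic genus $n$ in $\proj^{n-1}$ lying in a rational normal scroll of minimal degree and dimension $d$. Iterating reduces the problem to the case $d = n'/2$ with $n'$ even.

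In the critical case $d = n/2$, suppose $\mathcal{F}$ is a scroll of type $\underline{a}$ containing $C = \Gamma_1 \cup \Gamma_2$, and let $\pi \colon \mathcal{F} \to \proj^1$ be the scroll projection, restricting on $\Gamma_i$ to a morphism of degree $k_i$. Since $\pi|_C \colon C \to \proj^1$ is a morphism of degree $k_1 + k_2$, Lemma \ref{rem gonality} together with Remark \ref{cliff-max} (a general binary curve has maximal gonality $n/2 + 2$) gives $k_1 + k_2 \geq \gon(C) = n/2 + 2 > n/2 + 1$. If $k_1 \neq k_2$, Proposition \ref{2n-3} bounds $\dim\bigl(\sF(S,k_1)_{\underline{a}} \cap \sF(S,k_2)_{\underline{a}}\bigr) \leq 2n - 3$; by Proposition \ref{unicity} a generic scroll in this intersection determines a unique pair $(\Gamma_1, \Gamma_2)$ of rational normal curves through $S$ with the prescribed induced degrees, so the corresponding locus in the $2(n-1)$-dimensional parameter space of binary curves with nodes $S$ has dimension at most $2n - 3 < 2(n-1)$, hence is proper.

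The remaining subcase $k_1 = k_2 = k$ must be ruled out separately. Proposition \ref{unicity} shows that whenever $S \cap \Sing(\mathcal{F}) = \emptyset$, $\mathcal{F}$ contains only one rational normal curve through $S$ of induced degree $k$, which would force $\Gamma_1 = \Gamma_2$ and contradict that $C$ is binary. For balanced $\underline{a}$ with $d = n/2$ all $a_i \geq 1$, so every $\mathcal{F}$ of type $\underline{a}$ is smooth and this subcase is empty. For non-balanced $\underline{a}$, $\dim \sF(S)_{\underline{a}}$ is strictly smaller by Theorem \ref{dimensionvarie}(b), and the further condition $S \cap \Sing(\mathcal{F}) \neq \emptyset$ combined with the fiber bound of Proposition \ref{number_rnc} should still cut the dimension below $2(n-1)$; summing the contributions over the finitely many types $\underline{a}$ then completes the proof.

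The main obstacle is the detailed dimension count in the last subcase for non-balanced scrolls: one must quantify how imposing $S \cap \Sing(\mathcal{F}) \neq \emptyset$ cuts $\dim \sF(S)_{\underline{a}}$, and verify that this reduction more than compensates the extra fiber dimension coming from a positive-dimensional family of rational normal curves through $S$ inside such a singular $\mathcal{F}$.
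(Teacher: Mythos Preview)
Your overall architecture is close to the paper's, but there are two genuine gaps, and the second one is structural.

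\textbf{First gap: the gonality input.} You invoke that a general binary curve has maximal gonality $\lfloor (p_a+3)/2\rfloor$, citing Lemma~\ref{rem gonality} and Remark~\ref{cliff-max}. Neither of those results proves this: Lemma~\ref{rem gonality} is only an upper bound, and Remark~\ref{cliff-max} is an equivalence with maximal Clifford index, not a proof that either maximum is attained generically. So you are importing an external fact. Worse, the gonality argument needs $\pi|_C\colon C\to\proj^1$ to be an honest morphism, and if $S$ meets $\Sing(\F)$ the two lifts $\Gamma_i\hookrightarrow\F(a_1,\dots,a_d)$ need not meet over those points, so $\pi|_C$ is not a priori defined on $C$. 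The paper avoids all this: when $h+k$ is small it exhibits a divisor $\G\in|L+M|$ containing $C$ via the cohomology inequality $h^0(\F,L+M)>h^0(C,(L+M)|_C)$, getting a scroll of strictly smaller dimension and reducing by induction. No gonality is used.

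\textbf{Second gap: the direct dimension count versus the paper's ``singular $\Rightarrow$ project'' reduction.} In the case $k_1\neq k_2$ you assert that a generic scroll in $\sF(S,k_1)_{\underline a}\cap\sF(S,k_2)_{\underline a}$ determines a \emph{unique} pair $(\Gamma_1,\Gamma_2)$. Proposition~\ref{unicity} does not give this: it says the family is empty, a single curve, \emph{or positive-dimensional}, and you have not ruled out that the generic fibre over this intersection is positive-dimensional (which by Proposition~\ref{number_rnc} would mean $S\cap\Sing(\F)\neq\emptyset$ generically there). Exactly the same difficulty reappears in your acknowledged obstacle for non-balanced $\underline a$ with $k_1=k_2$. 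In both situations the trouble is the same: scrolls singular along $S$ spoil the direct count. The paper turns this obstruction into the engine of the proof. Rather than bounding $\dim p_2(I)$, it argues by contradiction: if $p_2$ were dominant then $\dim I\geq 2n-2>\dim\sF_{h,k}$, so the generic $p_1$-fibre is positive-dimensional; by Proposition~\ref{number_rnc} this forces some $P_i\in S\cap\Sing(\F)$; projecting from that \emph{singular} point drops $\dim\F$ by one (Remark~\ref{proj-scroll}), landing in $\proj^{n-1}$ with a scroll of dimension $\leq (n-1)/2$, and induction finishes. This single manoeuvre handles all $\underline a$ and both the $h\neq k$ and $h=k$ cases uniformly, with no separate treatment of balanced versus non-balanced types.

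So the missing idea is not a sharper estimate in your last subcase, but the realisation that the singular case should be fed back into the induction via projection rather than counted head-on.
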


\begin{proof} Let  $C= \Gamma_1 \cup \Gamma_2$ and let $\Gamma_1 \cap \Gamma_2 =\{ P_0, \cdots, P_{n+1}\}$. 
 We argue by induction on $ n$. If $ n=3$ the result is trivial.
 
 Assume that  $\F$ is a scroll of dimension $d $ containing $C$. 
 For $n\geq 4$ we are going to consider the projection from  a point $P_0 \in  \Gamma_1 \cap \Gamma_2$ onto a general hyperplane 
$$\pi_{P_{0}}: \proj^{n} \dashrightarrow  \proj^{n-1}. $$

 We have that $C'=\pi_{P_{0}}(C)$ is again a binary curves (by Remark \ref{proj-curve})   contained 
 in  $ \F'= \pi_{P_0}(\F)$  which is a scroll of dimension $\leq d$. 
 

In particular if $d \leq  \frac {n-1}{2}$ the theorem follows by induction.  

\hfill\break 
Therefore we are left with the case  where $C\subset \F $ a scroll of dimension $d= \frac n2$, where 
$ n$ is an even integer.  

The theorem follows  if we show that under these  hypotheses either   there exists  a scroll $\G \subset \F$  of lower dimension  containing $C$, or 
$ \F $ is singular at one of the points in $\Gamma_1 \cap \Gamma_2$,  say $P_0$. Indeed    in the first case $\dim (\G) < \frac {n-1}{2}$
whereas in the second case 
 taking the projection from the singular point $P_0$   we obtain a scroll 
 $ \F'= \pi_{P_0}(\F) \subset \proj^{n-1}$   of dimension $\leq \frac {n-1}{2}$ containing $C' = \pi_{P_{0}}(C)$.  

So let $C = \Gamma_1\cup\Gamma_2$ be a general binary curve and assume  that  there exists a scroll $ \F(a_1, \ldots, a_d)  \to  \F \subset \proj^{n}$  of degree $\frac n2 +1$ and dimension $\frac n2$ containing $C$.
Let $L$, $M$  be the pullback of $\Oh_{\proj^1}(1)$,  respectively  $\Oh_{\proj^{n}}(1)$.
It is clear that
 $|L|$ induces two maps
$$
 \xymatrix{
\Gamma_1 \ar[r]^{h:1} & \proj^1  & & \Gamma_2  \ar[r]^{k:1} & \proj^1 }. $$
Note that $h \leq \frac n2$ and $k \leq \frac n2$ since $\dim (\F)=\frac n2 $.

\hfill\break 
 If  $h+k < \frac{n+2}{2}$  then by \cite[Chapter 2]{miles}  and Riemann-Roch for $C$  we have
 $$h^0(\F, \Oh_{\F}(L+M) )= \frac{n+2}{2} + n > h^0(C, \Oh_C(L+M) )= h+k + n$$
i.e.,
there exists  $\G \in |L+M|$ containing $C$.  Such a $\G$ is linearly equivalent to a rational normal scroll and it is irreducible
since otherwise every proper component should be degenerate
because of the minimality of  $\deg (\G)$, contradicting $C$ non-degenerate.
Therefore if $h+k < \frac{n+2}{2}$ there exists  a scroll $\G$ of dimension $\leq\frac{n}{2}-1$ containing $C$.

\hfill\break 
If $h+k\geq  \frac{n+2}{2}$  and  $h \neq k$ 
we take the following families
\begin{itemize}
\item ${\mathcal \sB(S)} :=$  the family of binary  curves passing through  a set $S$ consisting of $n+2$ fixed points in linear general position $\{ P_0,\cdots, P_{n+1}\}  $;
\item
$ \sF_{h,k}  =  \sF(S,k)_{\underline{a}} \cap \sF(S,h)_{\underline{a}}:=$
 the family of scrolls of dimension $\frac{n}{2}$ in $\proj^n$  containing a binary curve  passing trough $\{ P_0,\cdots, P_{n+1}\}  $
 with induced maps onto $\proj^1$ of degree $h$, resp. $k$.
\end{itemize}
Consider now  the incidence correspondence
$$
 \xymatrix{
 I=\big\{ (\F,C) \in  \sL_{h,k} \times  {\mathcal C}  \ :  \  \F \supset C \big\}  \ar[r]^{\hspace{ + 25  mm}
 p_2}  \ar[d]^{p_1}  & \sB(S)  \\ 
\sF_{h,k} &  } $$
Recall that  $ \dim \mathcal \sB(S) = 2n-2$ by Theorem \ref{dimensionvarie} and  that $\dim \sF_{h,k}\leq 2n-3$ by  Proposition  \ref{2n-3}.

If  for a general binary curves there  exists a $d$-dimensional scroll containing it then  $p_2$ must be  dominant. In particular 
$\dim I \geq \dim \mathcal \sB(S) = 2n-2$ and moreover  the generic fibre of the map $p_1$ must have positive dimension.

Therefore, fixing $P_0, \cdots, P_{n+1}$ and $\F$,
 our dimension count shows in particular  that there exists a positive dimension family of  rational normal curves in  $\F$ containing $\Gamma_1 \cap \Gamma_2 =\{ P_0, \cdots, P_{n+1}\}$. 
 Therefore by  Proposition  \ref{Fk} and 
Proposition  \ref{number_rnc} we may conclude that one of the points in $\Gamma_1 \cap \Gamma_2$ is contained in $\text{Sing}(\F)$ and we can conclude.   

\hfill\break  
If $h+k\geq  \frac{n+2}{2}$  and  $h=k$ then by Proposition \ref{unicity}  there exists again a   positive dimensional family of rational normal curves passing through the $n+2$ points, 
hence    by Prop.   \ref{number_rnc}  and Prop. \ref{unicity}  $\F$ is singular  at one of the points in $\Gamma_1 \cap \Gamma_2$, and we can conclude. 
 \end{proof}

 \vspace{.5cm}
{\flushleft Marco Franciosi}\\
Dipartimento di Matematica, Universit\`a di Pisa\\
Via Buonarroti 1, I-56127 Pisa (Italy)\\
{\tt marco.franciosi@unipi.it}
\end{document}